\documentclass[pdflatex,sn-mathphys-num]{sn-jnl}% Math and Physical Sciences Numbered Reference Style

\usepackage{graphicx}%
\usepackage{multirow}%
\usepackage{amsmath,amssymb,amsfonts}%
\usepackage{amsthm}%
\usepackage{mathrsfs}%
\usepackage[title]{appendix}%
\usepackage{xcolor}%
\usepackage{textcomp}%
\usepackage{manyfoot}%
\usepackage{booktabs}%
\usepackage{algorithm}%
\usepackage{algorithmicx}%
\usepackage{algpseudocode}%
\usepackage{listings}%
\usepackage{enumitem}
\newcommand{\D}{\mathbb D}

\newcommand{\C}{\mathbb C}
\newcommand{\Ocal}{\mathcal O}
\newcommand{\Mcal}{\mathcal M}
\newcommand{\dd}{\,\mathrm d}

\theoremstyle{thmstyleone}%
\newtheorem{theorem}{Theorem}
\newtheorem{proposition}[theorem]{Proposition}
\newtheorem{lemma}[theorem]{Lemma}

\theoremstyle{thmstyletwo}%

\newtheorem{remark}[theorem]{Remark}

\theoremstyle{thmstylethree}%
\newtheorem{definition}[theorem]{Definition}

\begin{document}

\title[Meromorphic Continuation]{Determinant Characteristics and Argument-Principle Certification for Visible Poles in Meromorphic Continuation}
\author[1]{\fnm{Xiaomei} \sur{Yang}}\email{yangxiaomath@swjtu.edu.cn}

\author*[2]{\fnm{Zhiliang} \sur{Deng}}\email{dengzhl@uestc.edu.cn}
%\equalcont{These authors contributed equally to this work.}

%\author[1,2]{\fnm{Third} \sur{Author}}\email{iiiauthor@gmail.com}
%\equalcont{These authors contributed equally to this work.}

\affil[1]{\orgdiv{School of Mathematics}, \orgname{Southwest Jiaotong University}, \orgaddress{\street{No. 999, Xi'an Road, Pidu District}, \city{Chengdu}, \postcode{611756}, \state{Sichuan}, \country{China}}}

\affil*[2]{\orgdiv{School of Mathematical Science}, \orgname{University of Electronic Science and Technology of China}, \orgaddress{\street{No.2006, Xiyuan Ave, West Hi-Tech Zone}, \city{Chengdu}, \postcode{611731}, \state{Sichuan}, \country{China}}}

\abstract{We study outward meromorphic continuation from circular boundary data in the
unit disk.  The unknown function is holomorphic in the unit disk and admits a
meromorphic continuation to a larger disk, where finitely many exterior simple
poles are superposed on an unknown holomorphic background.  The positive
Fourier coefficients of the boundary trace are Taylor coefficients at the
origin, and exterior poles generate a finite exponential-sum component in these
coefficients.  We introduce shifted determinant characteristics and prove that,
in the pure finite-pole model, the determinant for the correct order factors
exactly into a nonzero constant times the polynomial whose zeros are the
reciprocals of the exterior poles.  The same zero set is obtained for noiseless
equispaced discrete Fourier coefficients; sampling changes only the amplitudes
through an aliasing factor.  For data containing a holomorphic background,
discretization effects, and noise, roots of a single empirical determinant are
only candidate reciprocal poles.  We therefore propose a root-propose and
contour-certify procedure: determinant roots generate candidate regions, while
local argument-principle counts, contour moments, empirical margins, and
persistence over determinant orders and shifts certify visible poles.  A
Rouch\'e-type perturbation analysis gives sufficient conditions for stable local
zero counts and explains how residues, pole separation, distance to the target
annulus boundary, shifts, and noise affect visibility.  Numerical experiments
verify the pure-pole identity, demonstrate certification under background and
noise, and show that high noise, weak residues, boundary-near poles, and close
poles naturally lead to partial recovery of contour-certified visible poles.}

\keywords{numerical meromorphic continuation, determinant characteristic, argument
principle, visible poles, Fourier coefficients, contour certification.}

\maketitle

\section{Introduction}
\label{sec:introduction}

Numerical analytic continuation is a classical ill-conditioned problem.
Although analytic continuation is unique when it exists, its numerical
realization from finite and noisy data is unstable unless suitable a priori
information is imposed.
The severity of this instability depends strongly on both the geometry of the
continuation domain and the assumed function class.
For radial continuation in a disk, a bounded holomorphic prior leads to a
comparatively mild conditional stability mechanism through the Hadamard
three-circles theorem: the loss of accuracy is governed by the logarithmic
radius.
This behavior is substantially different from continuation along strips or
channels, where the loss of accuracy may be much more severe; see, for example,
\cite{Trefethen2020}.

A substantial part of the classical literature treats numerical analytic
continuation as a stabilized reconstruction problem for holomorphic functions,
where stability is obtained from a priori bounds, geometric estimates, rational
approximation, or regularization assumptions.  Cannon and Miller
\cite{Cannon1965} studied stabilized continuation of bounded analytic functions,
while Henrici \cite{Henrici1966} developed a constructive Weierstrass-type
procedure for continuing a holomorphic function from Taylor data along a path.
Approximation-theoretic foundations go back to Walsh's work on boundary values,
Chebyshev approximation, and approximation by analytic or meromorphic functions
\cite{Seidel1942, Walsh1930, Walsh1950}.  Rational and conformal-mapping
approaches provide another important strand: Stefanescu \cite{Stefanescu1980}
studied stable analytic continuation by rational functions, and Bisshopp
\cite{Bisshopp1983} investigated numerical conformal mapping as a tool for
analytic continuation beyond an initially available domain.  In radial
geometries, Franklin \cite{Franklin1990} regularized analytic continuation by
imposing a prescribed bound and gave an FFT-based algorithm with
\(O(m\log m)\) complexity, with error behavior governed by Hadamard's
three-circles principle.  In strip geometries, Fu et al. \cite{Fu2009} proved an
optimal error bound for analytic continuation from approximate boundary-line
data and proposed a modified Tikhonov regularization method attaining the
corresponding order.  More recently, Trefethen \cite{Trefethen2023} advocated
AAA rational approximation as a practical tool for numerical analytic
continuation, especially for extrapolating analytic functions and estimating
complex singularities.  These works provide the classical and modern numerical
background for analytic continuation, but they do not directly address the
local contour certification of exterior poles from determinant characteristics
built from Fourier data.

The present work concerns outward meromorphic continuation from circular
boundary data.  The function is assumed to be holomorphic in the unit disk and
to admit a meromorphic continuation to a larger disk, where exterior poles are
the singularities that obstruct holomorphic continuation.  The stabilized task
is therefore not direct pointwise extrapolation, but the identification and
certification of exterior pole information.  Once such poles are reliably
identified and their principal parts are subtracted, the remaining problem is,
at least formally, reduced to the continuation of a holomorphic residual.  This
viewpoint is close in spirit to Miller's stabilized numerical analytic
prolongation with poles \cite{Miller1970}.  Miller formulated meromorphic
continuation as a stabilized inverse problem and sought a meromorphic
approximant with the minimum number of poles compatible with the data and a
prescribed global bound.  Under separation and nonvanishing-residue assumptions,
he proved H\"older-type stability for the recovered pole locations and residues,
and his SNAPP method produced a posteriori error circles for the computed
poles.  This classical work already identifies pole recovery as a natural
stabilized object in meromorphic continuation.

The algebraic structure exploited in the present paper is related to rational
recovery, exponential analysis, and Hankel-based parameter estimation.  Fourier
coefficients of rational functions contain finite exponential sums, which can
be recovered by Prony-type methods, ESPRIT, matrix pencils, and Hankel
structured techniques.  Derevianko \cite{Derevianko2025} recently developed a
Hankel pencil method for recovering rational functions from Fourier
coefficients, reconstructing poles inside and outside the unit circle through
special Hankel matrix pencils and analyzing the sensitivity of the recovered
poles under structured and unstructured perturbations.  Related work by
Derevianko and coauthors studies rational approximation and exponential-sum
reconstruction from Fourier data, including extended exponential sums, ESPIRA,
and multivariate extensions
\cite{DereviankoHuebner2025,DereviankoPlonka2022,DereviankoPlonkaPetz2021}.

Closely related ideas also appear in noisy analytic continuation and inverse
problems.  Ying \cite{Ying2022,Ying2022_b} used conformal or M\"obius
transformations together with Fourier-domain Prony-type methods to extract pole
or spectral information from limited noisy data.  Recent inverse-problem
studies have likewise used Fourier--Hankel or moment--Hankel structures for
counting and localization, including source counting for heat equations and
topological or phase-center recovery in acoustic inverse scattering
\cite{Deng2026_a,Deng2026_b,Deng2026_c}.  These works show that Fourier or
frequency-domain data, rational structure, and Hankel, Prony, or matrix-pencil
techniques form a powerful framework for parameter recovery.

The present paper differs from these rational, Prony-type, and Hankel-based
recovery methods in both its model and its certification objective.  We study
outward meromorphic continuation from circular boundary data.  The resulting
positive Fourier coefficient sequence contains an unknown holomorphic
background term in addition to the pole-generated exponential sum.  This
background, together with finite boundary sampling and measurement noise,
destroys exact finite-rank structure and may create spurious determinant,
pencil, or Prony roots.  Therefore, it is not reliable to identify the pole
number solely with a numerical Hankel rank, nor to accept all roots produced by
a single algebraic construction.

The main idea of this paper is to replace direct root acceptance by a
contour-counting certification principle.  For each trial order and shift, we
construct a determinant characteristic from the positive Fourier coefficients
of the boundary trace.  In the pure finite-pole case, the determinant for the
correct order factors exactly, and its zeros are the reciprocals of the
exterior poles.  We prove this factorization for both continuous Fourier
coefficients and noiseless equispaced discrete Fourier coefficients.  In the
discrete case, sampling aliasing changes the amplitudes but leaves the
reciprocal pole locations unchanged.  For perturbed data, however, determinant
roots are treated only as candidate reciprocal poles.  Candidate regions are
then tested by local argument-principle counts.  A pole is certified only when a
local contour count remains stable across a family of determinant orders and
shifts.

This visible-pole interpretation is central to the paper.  We do not claim
unconditional recovery of all poles in the exterior annulus.  Poles with small
residues, large modulus, poor separation, or weak determinant margins may be
invisible at the available noise level.  The method therefore reports those
exterior poles that can be certified from the available data, together with
persistence scores and contour margins.  Recovery of the full pole set requires
additional visibility, separation, and noise-margin conditions.

The contributions of this paper are threefold.  First, we derive the
Fourier--determinant structure induced by exterior poles in the disk and prove
exact determinant factorizations for both continuous and noiseless discrete
pure-pole data, including the effect of sampling aliasing.  Second, we develop a
root-propose and contour-certify procedure in which determinant roots generate
candidate regions and local argument-principle counts certify visible poles.
Third, we give a Rouch\'e-type perturbation analysis that connects coefficient
errors, determinant perturbations, contour margins, and stable zero counts; this
analysis explains the observed loss of visibility for weak, close,
boundary-near, or noise-dominated poles.  A residual pole-subtraction check is
included as an additional validation diagnostic.

The paper is organized as follows.
Section~\ref{sec:problem} introduces the meromorphic continuation model and
derives the continuous and discrete Fourier coefficient structures.
Section~\ref{sec:determinant} defines the determinant characteristics and proves
the pure-pole factorization identities.  Section~\ref{sec:argument-principle}
introduces local contour counts, contour moments, the root-propose and
contour-certify procedure, and the residual pole-subtraction diagnostic.
Section~\ref{sec:perturbation-visibility} develops the perturbation,
visibility, and Rouch\'e certification analysis, including the scope of the
visible-pole interpretation.  Section~\ref{sec:numerics} reports numerical
experiments, and Section~\ref{sec:conclusions} concludes the paper.

\section{Problem setting and Fourier coefficient models}
\label{sec:problem}

This section fixes the meromorphic continuation model and derives the
coefficient structures used throughout the paper.  We first formulate the
outward meromorphic continuation problem in the disk and introduce the
reciprocal pole variables.  We then show that the positive Fourier coefficients
of the exact boundary trace coincide with the Taylor coefficients of the
interior holomorphic function.  Finally, we discuss the practical case of
equispaced boundary samples and derive the corresponding aliasing identity.
In the pure-pole case, this discrete aliasing changes only the amplitudes of
the exponential sum and leaves the reciprocal pole locations unchanged.

Let  $\D=\{z\in\C: |z|<1\}$ and for $R>1$, $\D_R=\{z\in\C: |z|<R\}$. Denote by $\Ocal(\D)$ denote the  set  of holomorphic functions on $\D$ and by $\Mcal(\D_R)$  the set of meromorphic functions  on $\D_R$.
We assume that \(f\in\Ocal(\D)\) admits a meromorphic continuation
\(F\in\Mcal(\D_R)\).  The model considered in this paper is
\begin{equation}
    F(z)
    =
    h(z)+\sum_{j=1}^{N}\frac{r_j}{z-p_j},
    \qquad
    h\in\Ocal(\D_{\rho_h}),
    \qquad
    1<|p_j|<R<\rho_h,
    \label{eq:model}
\end{equation}
where the poles \(p_j\) are simple and distinct and \(r_j\neq0\).  The condition
\(\rho_h>R\) means that the holomorphic background has Taylor coefficients that
decay faster than the pole layer to be detected.

The objective is to identify exterior poles that are stably visible from the
data on the unit circle.  We do not assume that all formal poles in
\(\D_R\setminus\overline{\D}\) are recoverable under noise.  Poles with small
residues, large modulus, or poor separation may be invisible at the available
noise level.

We first consider the exact positive Fourier coefficients of the boundary
trace:
\begin{equation}
    a_k
    =
    \frac{1}{2\pi}
    \int_0^{2\pi}
    f(e^{i\theta})e^{-ik\theta}\,\dd\theta,
    \qquad k=0,1,2,\ldots .
    \label{eq:continuous-fourier}
\end{equation}

\begin{lemma}%[Fourier coefficients are Taylor coefficients]
\label{lem:fourier-taylor}
Under the model \eqref{eq:model}, the coefficients \(a_k\) in
\eqref{eq:continuous-fourier} are exactly the Taylor coefficients of \(f\) at
the origin:
\[
    f(z)=\sum_{k=0}^{\infty}a_k z^k,\qquad |z|<1.
\]
\end{lemma}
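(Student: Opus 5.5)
The key observation is that, under the model \eqref{eq:model}, $f$ is holomorphic on a disk strictly larger than the closed unit disk. Let $\rho_0=\min_j |p_j|>1$ and fix $r$ with $1<r<\rho_0$. On $\D_{\rho_0}$ the function $F$ has no poles: $h$ is holomorphic on $\D_{\rho_h}\supset\D_{\rho_0}$, and each term $r_j/(z-p_j)$ is holomorphic for $|z|<|p_j|$. Since $F=f$ on $\D$, $F$ is the holomorphic extension of $f$ to $\D_{\rho_0}$. In particular the boundary trace $f(e^{i\theta})$ in \eqref{eq:continuous-fourier} is well defined and continuous; it is interpreted as $F(e^{i\theta})$, which agrees with the radial limit of $f$.

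Next, expand $F(z)=\sum_{k\ge0} c_k z^k$ about the origin. This series converges absolutely and uniformly on the circle $|z|=1$, because its radius of convergence is at least $\rho_0>1$; Cauchy estimates on $|z|=r$ give $|c_k|\le M_r r^{-k}$. Substituting $z=e^{i\theta}$ and integrating term by term, which uniform convergence justifies, gives
\[
\frac{1}{2\pi}\int_0^{2\pi}F(e^{i\theta})e^{-ik\theta}\,\dd\theta=\sum_{m\ge0}c_m\frac{1}{2\pi}\int_0^{2\pi}e^{i(m-k)\theta}\,\dd\theta=c_k ,
\]
by orthogonality of the exponentials. As a cross-check, the same result follows from Cauchy's integral formula $c_k=\frac{1}{2\pi i}\oint_{|z|=1}F(z)z^{-k-1}\dd z$ with $z=e^{i\theta}$. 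Since $F=f$ on $\D$, we get $c_k=f^{(k)}(0)/k!$, and hence $a_k=c_k$ with $f(z)=\sum a_k z^k$ on $\D$.

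The only point requiring care is the first step. The lemma uses boundary values of $f$, which is a priori only holomorphic on the open disk. The argument therefore rests on the hypothesis $|p_j|>1$ together with $\rho_h>R>1$, which places the unit circle inside the holomorphy domain of $F$. Once that is established, everything else is standard. I would also note in passing that the pole terms contribute $-\sum_j r_j p_j^{-k-1}$ to $a_k$, via the geometric expansion of $1/(z-p_j)$. This is not needed for the lemma, but it previews the exponential-sum structure used later.
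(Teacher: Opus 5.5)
Your proof is correct and follows the paper's route. The decisive step in both is that $|p_j|>1$ and $\rho_h>1$ make $f=F|_{\D}$ holomorphic on a neighbourhood of $\overline{\D}$. After that, the paper applies Cauchy's coefficient formula on $|\zeta|=1$ with $\zeta=e^{i\theta}$, which is your ``cross-check''. Your main computation, integrating the uniformly convergent Taylor series term by term against $e^{-ik\theta}$, is an equivalent way to reach the same identity. (Minor point: writing $c_k$ for the Taylor coefficients clashes with the paper's amplitudes $c_j=-r_j/p_j$.)
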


\begin{proof}
All poles of \(F\) satisfy \(|p_j|>1\), while \(h\) is holomorphic in
\(\D_{\rho_h}\) with \(\rho_h>1\).  Hence \(f=F|_{\D}\) is holomorphic in a
neighbourhood of \(\overline{\D}\).  Applying the change of variables  \(\zeta=e^{i\theta}\), \(\dd\zeta=i e^{i\theta}\dd\theta\),  to Cauchy's coefficient formula
\[
    \frac{f^{(k)}(0)}{k!}
    =
    \frac{1}{2\pi i}
    \int_{|\zeta|=1}
    \frac{f(\zeta)}{\zeta^{k+1}}\,\dd\zeta,
\]
yields
\[
    \frac{f^{(k)}(0)}{k!}
    =
    \frac{1}{2\pi}
    \int_0^{2\pi}
    f(e^{i\theta})e^{-ik\theta}\,\dd\theta
    =
    a_k.
\]
\end{proof}

We define the reciprocal pole variables and the corresponding amplitudes as
\begin{equation}
    \lambda_j=\frac1{p_j},\qquad
    c_j=-\frac{r_j}{p_j}.
    \label{eq:lambda-c}
\end{equation}
Then \(|\lambda_j|<1\), and for \(|z|<1\),
\[
    \frac{r_j}{z-p_j}=-\frac{r_j}{p_j}\frac{1}{1-z/p_j} = c_j\sum_{k=0}^{\infty}\lambda_j^k z^k.
\]
Writing
\[
    h(z)=\sum_{k=0}^{\infty}h_k z^k,
\]
we obtain
\begin{equation}
    a_k = h_k+\sum_{j=1}^{N}c_j\lambda_j^k,
    \qquad k=0,1,2,\ldots .
    \label{eq:coeff-structure}
\end{equation}
Thus exterior poles \(p_j\) in the physical \(z\)-plane correspond to nodes
\(\lambda_j=1/p_j\) inside the unit disk in the auxiliary \(\lambda\)-plane.
The target annulus in the physical plane,
\[
    1<|p|<R,
\]
corresponds to the reciprocal search region
\begin{equation}
    U_R
    =
    \left\{
    \lambda\in\C:\frac1R<|\lambda|<1
    \right\}.
    \label{eq:reciprocal-search-region}
\end{equation}
Thus the exterior poles to be detected correspond to reciprocal nodes
\(\lambda_j\in U_R\).

Throughout the paper, \(p_j\) denotes an exterior pole in the physical
\(z\)-plane, \(r_j\) its residue, \(\lambda_j=1/p_j\) the corresponding
reciprocal pole in the \(\lambda\)-plane, and \(c_j=-r_j/p_j\) the associated
coefficient amplitude.  Exact continuous Fourier coefficients are denoted by
\(a_k\).  Superscripts \(M\) and \((\delta,M)\) indicate, respectively,
quantities computed from noiseless discrete samples and from noisy discrete
samples.  This convention will be used for both Fourier coefficients and the
determinant characteristics introduced below.

We next pass from exact boundary data to equispaced samples.
In computations the boundary values are sampled at equispaced points
\[
    \theta_\ell=\frac{2\pi\ell}{M},
    \qquad \ell=0,\ldots,M-1.
\]
For noiseless samples, define the discrete positive Fourier coefficients
\begin{equation}
    a_k^M
    =
    \frac1M
    \sum_{\ell=0}^{M-1}
    f(e^{i\theta_\ell})e^{-ik\theta_\ell},
    \qquad k=0,\ldots,M-1 .
    \label{eq:discrete-fourier}
\end{equation}
For noisy samples
\[
    y_\ell=f(e^{i\theta_\ell})+\eta_\ell,
\]
we use
\begin{equation}
    a_k^{\delta,M}
    =
    \frac1M
    \sum_{\ell=0}^{M-1}
    y_\ell e^{-ik\theta_\ell}
    =
    a_k^M+\varepsilon_k^M,
    \label{eq:noisy-discrete-fourier}
\end{equation}
where
\[
    \varepsilon_k^M
    =
    \frac1M
    \sum_{\ell=0}^{M-1}
    \eta_\ell e^{-ik\theta_\ell}.
\]
In particular,
\[
    |\varepsilon_k^M|
    \leq
    \max_{0\leq\ell\leq M-1}|\eta_\ell|.
\]

\begin{lemma}%[Aliasing identity]
\label{lem:aliasing}
Assume that \(f\) is holomorphic in a neighbourhood of \(\overline{\D}\) and
has Taylor coefficients \(a_k\).  Then, for \(k=0,\ldots,M-1\),
\begin{equation}
    a_k^M
    =
    \sum_{q=0}^{\infty}a_{k+qM}.
    \label{eq:aliasing}
\end{equation}
\end{lemma}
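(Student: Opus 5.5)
The plan is to substitute the Taylor expansion of \(f\) into the discrete sum \eqref{eq:discrete-fourier} and then use orthogonality of the discrete exponentials on the \(M\)-th roots of unity.

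Since \(f\) is holomorphic in a neighbourhood of \(\overline{\D}\), its Taylor series has radius of convergence \(\rho>1\). Hence \(\sum_{n\ge0}|a_n|<\infty\), because \(|a_n|\le C\rho_1^{-n}\) for any \(1<\rho_1<\rho\) by the Cauchy estimates. Therefore
\[
f(e^{i\theta_\ell})=\sum_{n=0}^{\infty}a_n e^{in\theta_\ell}
\]
converges absolutely for every \(\ell\). Inserting this into \eqref{eq:discrete-fourier} gives a finite sum over \(\ell\) of absolutely convergent series, so the two sums may be interchanged:
\[
a_k^M=\sum_{n=0}^{\infty}a_n\,\frac1M\sum_{\ell=0}^{M-1}e^{i(n-k)2\pi\ell/M}.
\]

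The inner sum is a geometric sum in \(\omega=e^{2\pi i(n-k)/M}\). It equals \(1\) when \(n\equiv k \pmod M\), since then \(\omega=1\). Otherwise it equals \((1-\omega^M)/(M(1-\omega))=0\), because \(\omega^M=1\) and \(\omega\ne1\).

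Since \(0\le k\le M-1\) and \(n\ge0\), the indices \(n\ge0\) with \(n\equiv k \pmod M\) are exactly \(n=k+qM\) with \(q\ge0\). This yields \eqref{eq:aliasing}, and the resulting series converges absolutely as a subseries of \(\sum_n|a_n|\).

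The only point needing care is justifying the interchange of sums. That is handled by the absolute summability of the \(a_n\), which follows from holomorphy beyond the closed disk. Under the model \eqref{eq:model}, this hypothesis holds because \(\min(\min_j|p_j|,\rho_h)>1\), as noted in the proof of Lemma~\ref{lem:fourier-taylor}.
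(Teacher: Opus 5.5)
Your proof is correct and follows essentially the same route as the paper: substitute the Taylor series on the circle, interchange the finite sum over \(\ell\) with the series, apply discrete orthogonality, and keep only the indices \(m=k+qM\). The only difference is cosmetic: you justify the interchange by absolute summability of \((a_n)\) rather than by uniform convergence on the unit circle. Either justification suffices, since the sum over \(\ell\) is finite.
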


\begin{proof}
Since \(f\) is holomorphic in a neighbourhood of \(\overline{\D}\), its Taylor
series converges absolutely and uniformly on the unit circle:
\[
    f(e^{i\theta})=\sum_{m=0}^{\infty}a_m e^{im\theta}.
\]
Substituting this expansion into \eqref{eq:discrete-fourier} and using uniform
convergence gives
\[
    a_k^M
    =
    \sum_{m=0}^{\infty}a_m
    \left(
    \frac1M
    \sum_{\ell=0}^{M-1}
    e^{i(m-k)\theta_\ell}
    \right).
\]
The discrete orthogonality relation is
\[
    \frac1M
    \sum_{\ell=0}^{M-1}
    e^{i(m-k)\theta_\ell}
    =
    \begin{cases}
    1, & m\equiv k \pmod M,\\
    0, & m\not\equiv k \pmod M.
    \end{cases}
\]
Since \(m\geq0\) and \(0\leq k\leq M-1\), the admissible indices are
\(m=k+qM\), \(q=0,1,2,\ldots\).  This proves \eqref{eq:aliasing}.
\end{proof}

\begin{lemma}%[Discrete pure-pole coefficients]
\label{lem:discrete-pure-pole}
Suppose
\[
    a_k=\sum_{j=1}^{N}c_j\lambda_j^k,
    \qquad |\lambda_j|<1.
\]
Then the noiseless discrete Fourier coefficients satisfy
\begin{equation}
    a_k^M
    =
    \sum_{j=1}^{N}c_j^M\lambda_j^k,
    \qquad
    c_j^M=\frac{c_j}{1-\lambda_j^M},
    \qquad k=0,\ldots,M-1.
    \label{eq:discrete-pure-pole-coeff}
\end{equation}
\end{lemma}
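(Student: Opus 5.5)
The natural route is to apply Lemma~\ref{lem:aliasing} directly to the pure-pole function and then sum a geometric series in each node. First I will check the hypothesis of Lemma~\ref{lem:aliasing}. In the pure-pole setting the underlying function is
\[
    f(z)=\sum_{j=1}^{N}\frac{c_j}{1-\lambda_j z}.
\]
Its only singularities are the points \(z=1/\lambda_j\), and each satisfies \(|1/\lambda_j|>1\). (If some \(\lambda_j=0\), that term is a constant and causes no difficulty.) Hence \(f\) is holomorphic in a neighbourhood of \(\overline{\D}\), with Taylor coefficients \(a_k=\sum_j c_j\lambda_j^k\). Lemma~\ref{lem:aliasing} therefore gives, for \(k=0,\ldots,M-1\),
\[
    a_k^M=\sum_{q=0}^{\infty}a_{k+qM}
    =\sum_{q=0}^{\infty}\sum_{j=1}^{N}c_j\lambda_j^{k+qM}.
\]

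Next I will interchange the finite sum over \(j\) with the series over \(q\). Each inner series converges absolutely because \(|\lambda_j^M|<1\), so the interchange is justified. Factoring out \(c_j\lambda_j^k\) then gives
\[
    a_k^M=\sum_{j=1}^{N}c_j\lambda_j^k\sum_{q=0}^{\infty}(\lambda_j^M)^q
    =\sum_{j=1}^{N}\frac{c_j}{1-\lambda_j^M}\lambda_j^k ,
\]
which is \eqref{eq:discrete-pure-pole-coeff}. The denominator \(1-\lambda_j^M\) never vanishes, again because \(|\lambda_j|<1\).

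There is no real obstacle here. The only points needing care are checking that the analyticity hypothesis of Lemma~\ref{lem:aliasing} holds and justifying the interchange of sums, and both follow from \(|\lambda_j|<1\). The resulting formula shows that sampling changes only the amplitudes \(c_j\) and leaves the nodes \(\lambda_j\) unchanged.
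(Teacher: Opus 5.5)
Your proposal is correct and follows the paper's proof: apply Lemma~\ref{lem:aliasing}, swap the finite sum over $j$ with the absolutely convergent geometric series in $q$, and sum the series. Your explicit check that $f(z)=\sum_j c_j/(1-\lambda_j z)$ is holomorphic in a neighbourhood of $\overline{\D}$ is a small but welcome addition, since the paper leaves that hypothesis check implicit.
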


\begin{proof}
By Lemma~\ref{lem:aliasing},
\[
    a_k^M
    =
    \sum_{q=0}^{\infty}a_{k+qM}
    =
    \sum_{q=0}^{\infty}
    \sum_{j=1}^{N}c_j\lambda_j^{k+qM}.
\]
Since \(|\lambda_j|<1\), the geometric series converges absolutely.  Thus
\[
    a_k^M
    =
    \sum_{j=1}^{N}c_j\lambda_j^k
    \sum_{q=0}^{\infty}\lambda_j^{qM}
    =
    \sum_{j=1}^{N}
    \frac{c_j}{1-\lambda_j^M}\lambda_j^k .
\]
\end{proof}

\begin{remark}%[Aliasing error for holomorphic coefficients]
If \(f\) is holomorphic in \(\D_\rho\) for some \(\rho>1\) and
\(|a_k|\leq C_\rho\rho^{-k}\), then
\[
    |a_k^M-a_k|
    \leq
    \sum_{q=1}^{\infty}|a_{k+qM}|
    \leq
    C_\rho\rho^{-k-M}\frac{1}{1-\rho^{-M}}.
\]
Hence \(a_k^M\to a_k\) exponentially fast as \(M\to\infty\), for fixed \(k\).
\end{remark}

\section{Determinant characteristics}
\label{sec:determinant}

This section introduces the determinant characteristic associated with the
Fourier/Taylor coefficient sequence.  We first define the determinant
characteristic for the exact continuous coefficients and prove its factorization
in the pure-pole case.  We then explain how the same construction is transferred
to noiseless discrete Fourier coefficients and to noisy discrete data.  In the
pure-pole case, equispaced sampling changes only the amplitudes of the
exponential sum and leaves the reciprocal pole locations unchanged.  Therefore
the determinant characteristic has the same zeros in the continuous and
noiseless discrete settings.

For a trial order \(n\geq1\) and a shift \(L\geq0\), define the determinant
characteristic
\begin{equation}
\mathcal D_{n,L}(\lambda)
=
\det
\begin{pmatrix}
a_L & a_{L+1} & \cdots & a_{L+n}\\
a_{L+1} & a_{L+2} & \cdots & a_{L+n+1}\\
\vdots & \vdots & & \vdots\\
a_{L+n-1} & a_{L+n} & \cdots & a_{L+2n-1}\\
1 & \lambda & \cdots & \lambda^n
\end{pmatrix}.
\label{eq:det-characteristic}
\end{equation}
This is a polynomial in \(\lambda\) of degree at most \(n\).  The first \(n\)
rows are formed from the shifted coefficient sequence, while the last row
introduces the variable \(\lambda\).

\begin{proposition}%[Pure-pole determinant identity]
\label{prop:continuous-pure-pole-det}
Suppose that \(h\equiv0\) and
\[
    a_k=\sum_{j=1}^{N}c_j\lambda_j^k,
    \qquad
    c_j\neq0,
    \qquad
    \lambda_i\neq\lambda_j\quad(i\neq j).
\]
Then, for \(n=N\),
\begin{equation}
    \mathcal D_{N,L}(\lambda)
    =
    A_L
    \prod_{j=1}^{N}(\lambda-\lambda_j),
    \label{eq:continuous-det-identity}
\end{equation}
where
\begin{equation}
    A_L
    =
    \left(\prod_{j=1}^{N}c_j\lambda_j^L\right)
    \prod_{1\leq i<j\leq N}(\lambda_j-\lambda_i)^2 .
    \label{eq:AL}
\end{equation}
Consequently, the zeros of \(\mathcal D_{N,L}\) are exactly the reciprocal
exterior poles \(\lambda_j=1/p_j\).
\end{proposition}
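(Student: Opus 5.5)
The plan is to factor the $(N+1)\times(N+1)$ matrix in \eqref{eq:det-characteristic} (with $n=N$) as a product of Vandermonde-type matrices. Write $V$ for the $(N+1)\times(N+1)$ matrix whose first $N$ columns are the vectors $v(\lambda_j)=(1,\lambda_j,\ldots,\lambda_j^N)^T$ and whose last column is $e_{N+1}=(0,\ldots,0,1)^T$. Let $B$ be the block-diagonal matrix with upper-left $N\times N$ block $W\,\mathrm{diag}(c_j\lambda_j^L)$, where $W_{ij}=\lambda_j^{i-1}$ for $i,j=1,\ldots,N$, and with lower-right entry $1$. Also introduce the $(N+1)\times(N+1)$ matrix $G(\lambda)$ whose first $N$ rows are $(e_1^T,0),\ldots,(e_N^T,0)$ and whose last row is $v(\lambda)^T$. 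The goal is the identity that the matrix in \eqref{eq:det-characteristic} equals $\widetilde B\,V^T$, with $\widetilde B$ suitably arranged so that the last row reproduces $(1,\lambda,\ldots,\lambda^N)$.

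It is cleaner to argue directly. The row with index $i=0,\ldots,N-1$ has entries $a_{L+i+m}=\sum_j c_j\lambda_j^{L+i}\lambda_j^m$ for $m=0,\ldots,N$. Hence the first $N$ rows form $W\,\mathrm{diag}(c_j\lambda_j^L)\,U$, where $U$ is the $N\times(N+1)$ matrix with rows $(1,\lambda_j,\ldots,\lambda_j^N)$. The full matrix is therefore
\[
\begin{pmatrix} W D & 0\\ 0 & 1\end{pmatrix}
\begin{pmatrix} U \\ v(\lambda)^T\end{pmatrix},
\qquad D=\mathrm{diag}(c_j\lambda_j^L).
\]
Taking determinants, $\mathcal D_{N,L}(\lambda)=\det W\cdot\prod_j c_j\lambda_j^L\cdot\det\begin{pmatrix}U\\ v(\lambda)^T\end{pmatrix}$. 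The last factor is the $(N+1)$-point Vandermonde determinant in the nodes $\lambda_1,\ldots,\lambda_N,\lambda$, which equals $\prod_{i<j}(\lambda_j-\lambda_i)\prod_j(\lambda-\lambda_j)$. Likewise, $\det W=\prod_{i<j}(\lambda_j-\lambda_i)$.

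Multiplying the two Vandermonde factors produces the squared product in \eqref{eq:AL}, which gives \eqref{eq:continuous-det-identity}. The main point requiring care is the bookkeeping of orientation: $W$ has columns indexed by $j$, while $U$ has rows indexed by $j$, so both carry the same sign convention and their product is a genuine square with no stray sign. To see that the zeros are exactly the $\lambda_j$, observe that $c_j\neq0$ and the nodes are distinct. For the factor $\lambda_j^L$ to be nonzero we also need $\lambda_j\neq0$, which holds because $\lambda_j=1/p_j$ with $p_j$ finite; hence $A_L\neq0$. The polynomial then has degree exactly $N$, and its zeros are precisely the $\lambda_j$.
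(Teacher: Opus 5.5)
Your direct argument in the second paragraph is correct and is essentially the paper's proof. The paper likewise writes the upper block as $VCW$, with $V$ the square Vandermonde matrix, $C=\operatorname{diag}(c_j\lambda_j^L)$ and $W$ the rectangular Vandermonde block, factors out $\det(VC)$, and applies the Vandermonde formula to both factors. Your explicit remark that $\lambda_j\neq0$ is needed for $A_L\neq0$ is a point the paper only uses implicitly. The discarded first paragraph, with $B$, $G(\lambda)$ and $\widetilde B V^T$, plays no role in the proof and should be deleted.
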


\begin{proof}
Set
\[
    v(x)=(1,x,\ldots,x^N).
\]
For \(i=0,\ldots,N-1\), the \(i\)-th row of the upper block in
\eqref{eq:det-characteristic} is
\[
    (a_{L+i},a_{L+i+1},\ldots,a_{L+i+N})
    =
    \sum_{j=1}^{N}c_j\lambda_j^{L+i}v(\lambda_j).
\]
Let
\[
    V=(\lambda_j^i)_{i=0,\ldots,N-1}^{j=1,\ldots,N},
    \qquad
    C=\operatorname{diag}(c_1\lambda_1^L,\ldots,c_N\lambda_N^L),
\]
and let \(W\) be the \(N\times(N+1)\) matrix whose \(j\)-th row is
\(v(\lambda_j)\):
\[
    W=
    \begin{pmatrix}
    v(\lambda_1)\\
    \vdots\\
    v(\lambda_N)
    \end{pmatrix}.
\]
Then the upper \(N\times(N+1)\) block of \eqref{eq:det-characteristic} is
\(VCW\).  Hence
\[
    \mathcal D_{N,L}(\lambda)
    =
    \det
    \begin{pmatrix}
    VCW\\
    v(\lambda)
    \end{pmatrix}.
\]
Since the nodes \(\lambda_j\) are distinct and \(c_j\lambda_j^L\neq0\), the
matrix \(VC\) is nonsingular.  Therefore
\[
    \mathcal D_{N,L}(\lambda)
    =
    \det(VC)
    \det
    \begin{pmatrix}
    W\\
    v(\lambda)
    \end{pmatrix}.
\]
Applying the standard Vandermonde determinant formula to \(V\) and to the
matrix with rows \(v(\lambda_1),\ldots,v(\lambda_N),v(\lambda)\), we obtain
\[
    \mathcal D_{N,L}(\lambda)
    =
    \left(\prod_{j=1}^{N}c_j\lambda_j^L\right)
    \prod_{1\leq i<j\leq N}(\lambda_j-\lambda_i)^2
    \prod_{j=1}^{N}(\lambda-\lambda_j).
\]
This proves the proposition.
\end{proof}

We next define the corresponding determinant characteristic for noiseless
discrete Fourier coefficients.  Let \(a_k^M\) be the coefficients obtained from
\(M\) equispaced boundary samples as in \eqref{eq:discrete-fourier}.  We denote
by
\[
    \mathcal D_{n,L}^{M}(\lambda)
\]
the determinant obtained from \eqref{eq:det-characteristic} by replacing every
coefficient \(a_{L+i+j}\) by \(a_{L+i+j}^{M}\).  We assume
\[
    L+2n\leq M,
\]
so that all coefficients appearing in the determinant are among
\(a_0^M,\ldots,a_{M-1}^M\) and no periodic re-indexing is used.

\begin{proposition}[Discrete pure-pole determinant identity]
\label{prop:discrete-pure-pole-det}
Suppose that
\[
    a_k=\sum_{j=1}^{N}c_j\lambda_j^k,
    \qquad
    |\lambda_j|<1,
    \qquad
    c_j\neq0,
    \qquad
    \lambda_i\neq\lambda_j\quad(i\neq j).
\]
Let \(a_k^M\) be the noiseless discrete Fourier coefficients obtained from
\(M\) equispaced samples.  If \(L+2N\leq M\), then
\begin{equation}
    \mathcal D_{N,L}^{M}(\lambda)
    =
    A_L^M
    \prod_{j=1}^{N}(\lambda-\lambda_j),
    \label{eq:discrete-det-identity}
\end{equation}
where
\begin{equation}
    A_L^M
    =
    \left(
    \prod_{j=1}^{N}
    \frac{c_j}{1-\lambda_j^M}\lambda_j^L
    \right)
    \prod_{1\leq i<j\leq N}(\lambda_j-\lambda_i)^2 .
    \label{eq:ALM}
\end{equation}
Thus the discrete determinant characteristic has the same zeros
\(\lambda_j=1/p_j\) as the continuous determinant characteristic.
\end{proposition}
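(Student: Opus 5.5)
The plan is to reduce the statement to Proposition~\ref{prop:continuous-pure-pole-det} by way of Lemma~\ref{lem:discrete-pure-pole}. By that lemma, for \(k=0,\ldots,M-1\) the noiseless discrete coefficients are a pure exponential sum
\[
    a_k^M=\sum_{j=1}^{N}c_j^M\lambda_j^k,\qquad c_j^M=\frac{c_j}{1-\lambda_j^M}.
\]
The nodes \(\lambda_j\) are unchanged, and only the amplitudes are modified. Since \(|\lambda_j|<1\), we have \(|\lambda_j^M|<1\), so \(1-\lambda_j^M\neq0\). Hence \(c_j^M\) is well defined, and \(c_j^M\neq0\) because \(c_j\neq0\).

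Next I would check that every coefficient entering \(\mathcal D_{N,L}^M\) lies in the range where this formula holds. The largest index appearing is \(L+2N-1\). The hypothesis \(L+2N\le M\) gives \(L+2N-1\le M-1\), so every entry \(a^M_{L+i+j}\) equals \(\sum_j c_j^M\lambda_j^{L+i+j}\) exactly, with no wrap-around.

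With this in hand, I would apply Proposition~\ref{prop:continuous-pure-pole-det} verbatim to the sequence \(b_k=\sum_j c_j^M\lambda_j^k\). Its hypotheses hold: the amplitudes \(c_j^M\) are nonzero and the nodes \(\lambda_j\) are distinct. The determinant \(\mathcal D_{N,L}^M\) coincides with the determinant \eqref{eq:det-characteristic} built from \(b_k\), because only the indices \(k\le L+2N-1\) are used. The proposition then yields \eqref{eq:discrete-det-identity} with \(A_L\) evaluated at the amplitudes \(c_j^M\), which is exactly \eqref{eq:ALM}.

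For the zero statement, I would note that \(A_L^M\neq0\) whenever \(\lambda_j\neq0\) for all \(j\); this holds because \(\lambda_j=1/p_j\). Then the zeros of \(\mathcal D_{N,L}^M\) are precisely the \(\lambda_j\). There is no real obstacle here. The only points needing care are the index bound that rules out periodic re-indexing and the nonvanishing of \(1-\lambda_j^M\).
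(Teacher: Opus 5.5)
Your proposal is correct and follows the paper's proof: you rewrite \(a_k^M\) via Lemma~\ref{lem:discrete-pure-pole} as an exponential sum with the same distinct nodes and nonzero amplitudes \(c_j^M=c_j/(1-\lambda_j^M)\), then apply Proposition~\ref{prop:continuous-pure-pole-det} with \(c_j\) replaced by \(c_j^M\). You also check that \(L+2N-1\leq M-1\), so no wrap-around occurs, and that \(\lambda_j=1/p_j\neq0\), so \(A_L^M\neq0\) and the zero set is exactly \(\{\lambda_j\}\); the paper leaves both points implicit, and you handle them correctly.
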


\begin{proof}
By Lemma~\ref{lem:discrete-pure-pole},
\[
    a_k^M
    =
    \sum_{j=1}^{N}c_j^M\lambda_j^k,
    \qquad
    c_j^M=\frac{c_j}{1-\lambda_j^M}.
\]
Since \(|\lambda_j|<1\), the factors \(1-\lambda_j^M\) are nonzero.  Therefore
the discrete coefficient sequence is again a finite exponential sum with the
same distinct nodes \(\lambda_j\), but with modified nonzero amplitudes
\(c_j^M\).  Applying Proposition~\ref{prop:continuous-pure-pole-det} with
\(c_j\) replaced by \(c_j^M\) gives
\[
    \mathcal D_{N,L}^{M}(\lambda)
    =
    \left(
    \prod_{j=1}^{N}c_j^M\lambda_j^L
    \right)
    \prod_{1\leq i<j\leq N}(\lambda_j-\lambda_i)^2
    \prod_{j=1}^{N}(\lambda-\lambda_j).
\]
Substituting \(c_j^M=c_j/(1-\lambda_j^M)\) gives
\eqref{eq:discrete-det-identity} and \eqref{eq:ALM}.
\end{proof}

\begin{remark}%[Continuous and discrete determinants]
In the pure-pole case, equispaced sampling does not change the determinant
zeros.  It only changes the amplitudes from \(c_j\) to
\[
    c_j^M=\frac{c_j}{1-\lambda_j^M}.
\]
Since \(|\lambda_j|<1\), one has \(c_j^M\to c_j\) exponentially fast as
\(M\to\infty\).
\end{remark}

Finally, for noisy discrete data \(a_k^{\delta,M}\), we denote by
\[
    \mathcal D_{n,L}^{\delta,M}(\lambda)
\]
the determinant obtained from \eqref{eq:det-characteristic} by replacing every
coefficient \(a_{L+i+j}\) by \(a_{L+i+j}^{\delta,M}\).  This determinant is a
perturbation of \(\mathcal D_{n,L}^{M}\).  When a holomorphic background is
present, it is also perturbed by the coefficient sequence \((h_k)\).  The next
section explains how such perturbations are handled by local argument-principle
counts and Rouch\'e-type certification.

%\section{Argument-principle certification of visible poles}
%\label{sec:argument-principle}
%
%This section explains how determinant characteristics are converted into pole
%counts and pole locations.  The key point is that the argument principle is
%first a zero-counting tool for determinant characteristics.  In the pure-pole
%case, these zeros coincide with the reciprocal exterior poles
%\(\lambda_j=1/p_j\).  Thus pole recovery is reformulated as a zero counting and
%zero localization problem in the auxiliary \(\lambda\)-plane.

%%%%%%%%%%%%%%%%%%%%%%%%%%%%%%%%%%%%%%%%%%%%%%%%%%%%%%%%
\section{Argument-principle certification of visible poles}
\label{sec:argument-principle}

This section explains how determinant characteristics are converted into
certified visible pole information.  The argument principle is used first as a
zero-counting tool for determinant characteristics.  In the pure-pole case, the
zeros of the correct determinant characteristic coincide with the reciprocal
exterior poles.  For perturbed data, determinant roots are treated only as
candidates; a pole is accepted only when a local contour count remains stable
over a family of determinant orders and shifts.

The construction proceeds in four steps.  Contour counts are first assigned to
determinant characteristics through the argument principle.  Contour moments are
then used to localize a zero inside a certified local contour.  Next, a
root-propose and contour-certify procedure is introduced to select stable
candidate regions across determinant orders and shifts.  The section concludes
with the certified output and a residual pole-subtraction diagnostic.

\subsection{Argument-principle counts}
\label{subsec:argument-counts}

Let \(D(\lambda)\) denote one of the determinant characteristics introduced in
Section~\ref{sec:determinant}, for example
\[
    D(\lambda)=\mathcal D_{n,L}(\lambda),
    \qquad
    D(\lambda)=\mathcal D_{n,L}^{M}(\lambda),
    \qquad
    D(\lambda)=\mathcal D_{n,L}^{\delta,M}(\lambda).
\]
Let \(\Gamma\) be a positively oriented contour in the \(\lambda\)-plane such
that \(D\) has no zero on \(\Gamma\).  Define
\begin{equation}
    S_\kappa(D,\Gamma)
    :=
    \frac{1}{2\pi i}
    \int_{\Gamma}
    \lambda^\kappa
    \frac{D'(\lambda)}{D(\lambda)}
    \,\dd\lambda,
    \qquad \kappa=0,1,2,\ldots .
    \label{eq:contour-moments}
\end{equation}
The zeroth moment is the logarithmic contour integral
\begin{equation}
    S_0(D,\Gamma)
    =
    \frac{1}{2\pi i}
    \int_{\Gamma}
    \frac{D'(\lambda)}{D(\lambda)}
    \,\dd\lambda .
    \label{eq:S0-count}
\end{equation}
By the argument principle, \(S_0(D,\Gamma)\) is the number of zeros of \(D\)
inside \(\Gamma\), counted with multiplicity.

For the continuous determinant characteristic we write
\begin{equation}
    C_{n,L}(\Gamma)
    =
    S_0(\mathcal D_{n,L},\Gamma),
    \label{eq:continuous-count}
\end{equation}
and for the noisy discrete determinant characteristic we write
\begin{equation}
    C_{n,L}^{\delta,M}(\Gamma)
    =
    S_0(\mathcal D_{n,L}^{\delta,M},\Gamma).
    \label{eq:noisy-discrete-count}
\end{equation}

In the exact pure-pole setting, the interpretation of this count is immediate.
If \(n=N\), then Proposition~\ref{prop:continuous-pure-pole-det} gives
\[
    \mathcal D_{N,L}(\lambda)
    =
    A_L\prod_{j=1}^{N}(\lambda-\lambda_j),
    \qquad A_L\neq0.
\]
Hence the zeros of \(\mathcal D_{N,L}\) are precisely
\(\lambda_1,\ldots,\lambda_N\).  Therefore, for any contour \(\Gamma\) avoiding
the \(\lambda_j\)'s,
\begin{equation}
    C_{N,L}(\Gamma)
    =
    \#\{j:\lambda_j \text{ lies inside }\Gamma\}.
    \label{eq:pure-pole-local-count}
\end{equation}
The same conclusion holds for the noiseless discrete determinant
\(\mathcal D_{N,L}^{M}\), because Proposition~\ref{prop:discrete-pure-pole-det}
shows that equispaced sampling changes only the amplitudes, not the reciprocal
pole locations.

This also explains the role of a large contour in the ideal case.  If the
correct order \(N\) is known and no perturbation is present, a contour enclosing
all reciprocal poles counts them all.  For instance, if
\[
    \max_{1\leq j\leq N}|\lambda_j|<\rho<1,
\]
then
\[
    C_{N,L}(|\lambda|=\rho)=N.
\]
For the target physical annulus \(1<|p|<R\), the corresponding reciprocal
region is
\[
    U_R=\{\lambda\in\C:1/R<|\lambda|<1\}.
\]
In principle, one may count zeros in this annulus by subtracting two circular
counts.  This ideal observation is useful conceptually, but it is not used as a
practical pole-counting rule when the order is unknown and the data are
perturbed.

\subsection{Contour moments and local zero localization}
\label{subsec:local-contour-moments}

The higher contour moments encode the locations of the zeros enclosed by
\(\Gamma\).  The following lemma records the precise relation.

\begin{lemma}
\label{lem:contour-moments}
Let \(D\) be holomorphic in a neighbourhood of the closure of the domain
bounded by \(\Gamma\), and assume that \(D\) has no zero on \(\Gamma\).
Let \(\mu_1,\ldots,\mu_s\) be the distinct zeros of \(D\) inside \(\Gamma\),
with multiplicities \(m_1,\ldots,m_s\).  Then
\begin{equation}
    S_\kappa(D,\Gamma)
    =
    \sum_{\ell=1}^{s}m_\ell\mu_\ell^\kappa,
    \qquad \kappa=0,1,2,\ldots .
    \label{eq:power-sums}
\end{equation}
In particular,
\[
    S_0(D,\Gamma)=\sum_{\ell=1}^{s}m_\ell,
\]
which is the number of zeros inside \(\Gamma\), counted with multiplicity.
\end{lemma}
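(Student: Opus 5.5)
The plan is the standard residue computation for the logarithmic derivative, with one factorization step. Write \(\Omega\) for the bounded domain enclosed by \(\Gamma\). Since \(D\) is holomorphic near \(\overline\Omega\) and nonvanishing on \(\Gamma\), its zeros in a compact neighbourhood of \(\overline\Omega\) are finite in number. Only \(\mu_1,\ldots,\mu_s\) lie inside \(\Gamma\), so
\[
    D(\lambda)=g(\lambda)\prod_{\ell=1}^{s}(\lambda-\mu_\ell)^{m_\ell},
\]
where \(g\) is holomorphic on a neighbourhood \(U\) of \(\overline\Omega\) and has no zeros in \(\overline\Omega\). To get \(g\), divide out each zero factor \((\lambda-\mu_\ell)^{m_\ell}\) in turn, using the local power-series factorization at each \(\mu_\ell\). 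Then shrink \(U\) slightly so that \(g\) has no zeros on \(U\) itself; this is possible because the zeros of \(g\) are isolated and none lies on the compact set \(\overline\Omega\).

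The key step is logarithmic differentiation:
\[
    \frac{D'(\lambda)}{D(\lambda)}
    =
    \frac{g'(\lambda)}{g(\lambda)}
    +
    \sum_{\ell=1}^{s}\frac{m_\ell}{\lambda-\mu_\ell}.
\]
Multiply by \(\lambda^\kappa\) and integrate over \(\Gamma\) term by term. The first term \(\lambda^\kappa g'/g\) is holomorphic on \(U\supset\overline\Omega\), so Cauchy's theorem gives zero integral. For each \(\ell\), the Cauchy integral formula applied to the entire function \(\lambda^\kappa\) gives
\[
    \frac{1}{2\pi i}\int_\Gamma\frac{\lambda^\kappa}{\lambda-\mu_\ell}\,\dd\lambda=\mu_\ell^\kappa,
\]
because \(\mu_\ell\) lies inside the positively oriented contour \(\Gamma\) with winding number one. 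Summing these contributions yields \eqref{eq:power-sums}. Setting \(\kappa=0\) gives \(S_0=\sum_\ell m_\ell\), which recovers the argument-principle count already used in \eqref{eq:S0-count}.

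The only point needing care is the topological setting: we need that \(\Gamma\) is a simple closed (Jordan, piecewise smooth) curve, that every interior point has winding number one, and that Cauchy's theorem applies on a neighbourhood of \(\overline\Omega\). These are the implicit hypotheses in the phrase ``positively oriented contour'' used throughout Section~\ref{subsec:argument-counts}. I would state them explicitly and otherwise treat this step as routine.
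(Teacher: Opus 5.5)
Your proof is correct and is essentially the paper's argument. The paper factors \(D\) locally at each \(\mu_\ell\), reads off that \(\lambda^\kappa D'/D\) has a simple pole there with residue \(m_\ell\mu_\ell^\kappa\), and applies the residue theorem; you instead factor out all enclosed zeros at once and evaluate the pieces by Cauchy's theorem and the Cauchy integral formula, which is the same computation with the residue theorem unpacked, and you also state explicitly the contour hypotheses that the paper leaves implicit.
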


\begin{proof}
Since \(D\) is holomorphic and has no zero on \(\Gamma\), the logarithmic
derivative \(D'/D\) is meromorphic inside \(\Gamma\).  If \(\mu_\ell\) is a
zero of multiplicity \(m_\ell\), then locally
\[
    D(\lambda)
    =
    (\lambda-\mu_\ell)^{m_\ell}g_\ell(\lambda),
    \qquad
    g_\ell(\mu_\ell)\neq0.
\]
Hence
\[
    \frac{D'(\lambda)}{D(\lambda)}
    =
    \frac{m_\ell}{\lambda-\mu_\ell}
    +
    \frac{g_\ell'(\lambda)}{g_\ell(\lambda)}.
\]
Therefore
\[
    \lambda^\kappa\frac{D'(\lambda)}{D(\lambda)}
\]
has a simple pole at \(\mu_\ell\) with residue \(m_\ell\mu_\ell^\kappa\).
The result follows from the residue theorem.
\end{proof}

When a certified local contour contains exactly one zero, counted with
multiplicity, the zero location is obtained from the first two contour moments.
Indeed, if the unique zero inside \(\Gamma\) is \(\mu\), then
\[
    S_0(D,\Gamma)=1,
    \qquad
    S_1(D,\Gamma)=\mu.
\]
Thus
\begin{equation}
    \mu
    =
    \frac{S_1(D,\Gamma)}{S_0(D,\Gamma)}.
    \label{eq:single-zero-location}
\end{equation}
Consequently, for the empirical determinant characteristic
\(\mathcal D_{n,L}^{\delta,M}\), the reciprocal pole associated with a
single-zero contour is estimated by
\begin{equation}
    \lambda_{n,L}(\Gamma)
    =
    \frac{
    S_1(\mathcal D_{n,L}^{\delta,M},\Gamma)
    }{
    S_0(\mathcal D_{n,L}^{\delta,M},\Gamma)
    },
    \label{eq:local-lambda-estimate}
\end{equation}
and the corresponding physical pole is
\[
    p_{n,L}(\Gamma)=\frac1{\lambda_{n,L}(\Gamma)}.
\]

If a contour encloses more than one zero, the same moments in principle recover
all enclosed zeros.  Let
\[
    q=S_0(D,\Gamma)
\]
be the number of enclosed zeros counted with multiplicity, and write them as
\[
    \nu_1,\ldots,\nu_q,
\]
with repetitions according to multiplicity.  Then
\[
    s_k
    :=
    \sum_{\alpha=1}^{q}\nu_\alpha^k
    =
    S_k(D,\Gamma),
    \qquad k=1,\ldots,q.
\]
The elementary symmetric coefficients \(e_1,\ldots,e_q\) are determined by
Newton's identities,
\[
    k e_k
    =
    \sum_{j=1}^{k}
    (-1)^{j-1}e_{k-j}s_j,
    \qquad
    k=1,\ldots,q,
    \qquad e_0=1.
\]
Thus one obtains the monic polynomial
\[
    P_\Gamma(t)
    =
    t^q-e_1t^{q-1}+e_2t^{q-2}-\cdots+(-1)^q e_q,
\]
whose roots are precisely the zeros inside \(\Gamma\), counted with
multiplicity.

This multi-zero reconstruction is not used as the main numerical rule.  It
requires higher-order moments and a subsequent polynomial root computation,
both of which are sensitive to noise.  The certification procedure below
therefore refines the contour family so that each accepted contour contains one
zero only.  This gives a local and stable estimate through
\eqref{eq:local-lambda-estimate}.

\subsection{Root-propose and contour-certify procedure}
\label{subsec:root-propose-contour-certify}

For perturbed data, a root of a single empirical determinant is not accepted as
a pole.  The empirical determinant
\[
    \mathcal D_{n,L}^{\delta,M}(\lambda)
\]
depends on the trial order \(n\), the shift \(L\), the holomorphic background,
finite sampling, and noise.  Its roots are therefore only candidate reciprocal
poles.  The certification principle is:

\[
    \text{determinant roots propose, local contours certify.}
\]

Let
\[
    \mathcal L=\{L_1,\ldots,L_s\}\subset\mathbb N_0
\]
be a finite set of shifts, and let \(n_{\min}\) and \(n_{\max}\) be the minimum
and maximum trial orders.  We assume that the available coefficients satisfy
\[
    L+2n_{\max}\leq M,
    \qquad L\in\mathcal L .
\]
Define the testing family
\[
    \mathcal I
    =
    \{(n,L): n_{\min}\leq n\leq n_{\max},\ L\in\mathcal L\}.
\]
For each \((n,L)\in\mathcal I\), compute the roots of
\[
    \mathcal D_{n,L}^{\delta,M}(\lambda)=0
\]
inside the reciprocal search region
\[
    U_R
    =
    \left\{\lambda\in\C:\frac1R<|\lambda|<1\right\}.
\]
Collect all such roots in
\[
    \mathcal Z
    =
    \left\{
    z\in U_R:
    \mathcal D_{n,L}^{\delta,M}(z)=0
    \text{ for some }(n,L)\in\mathcal I
    \right\}.
\]
The points in \(\mathcal Z\) are candidate points suggested by different
determinant characteristics.

A genuine visible pole should generate a stable cluster of candidate roots.
By contrast, roots caused by noise, the holomorphic background, or an unsuitable
trial order usually appear less consistently.  We therefore cluster the points
in \(\mathcal Z\).  Let
\[
    \mathcal Z_1,\ldots,\mathcal Z_J
\]
be the resulting clusters, and let \(\zeta_\nu\) be a robust center of
\(\mathcal Z_\nu\), for instance the componentwise median of the real and
imaginary parts.

Around each candidate center \(\zeta_\nu\), choose a positively oriented circle
\[
    \Gamma_\nu
    =
    \{\lambda:|\lambda-\zeta_\nu|=\rho_\nu\}.
\]
The radius \(\rho_\nu\) is chosen so that \(\Gamma_\nu\subset U_R\) and the
contour does not overlap neighbouring candidate clusters.  Whether
\(\Gamma_\nu\) actually certifies a visible pole is decided by local contour
counts.

For each \((n,L)\in\mathcal I\), define
\[
    C_{n,L}^{\delta,M}(\Gamma_\nu)
    =
    \frac{1}{2\pi i}
    \int_{\Gamma_\nu}
    \frac{
    (\mathcal D_{n,L}^{\delta,M})'(\lambda)
    }{
    \mathcal D_{n,L}^{\delta,M}(\lambda)
    }
    \,\dd\lambda .
\]
The hit set of \(\Gamma_\nu\) is
\[
    \mathcal I_\nu
    =
    \left\{
    (n,L)\in\mathcal I:
    C_{n,L}^{\delta,M}(\Gamma_\nu)=1
    \right\}.
\]
Its persistence score is
\begin{equation}
    \operatorname{Pers}(\Gamma_\nu)
    =
    \frac{|\mathcal I_\nu|}{|\mathcal I|}.
    \label{eq:persistence-score}
\end{equation}
This score measures how often the candidate contour encloses exactly one
empirical determinant zero over the selected determinant orders and shifts.

For every \((n,L)\in\mathcal I_\nu\), the enclosed zero is estimated by
\eqref{eq:local-lambda-estimate}.  A stable reciprocal pole estimate associated
with \(\Gamma_\nu\) is then defined by
\begin{equation}
    \widehat\lambda_\nu
    =
    \operatorname{median}
    \left\{
    \lambda_{n,L}(\Gamma_\nu):
    (n,L)\in\mathcal I_\nu
    \right\},
    \label{eq:lambda-hat-median}
\end{equation}
where the median is taken componentwise in the real and imaginary parts.

We also compute the empirical contour margin
\begin{equation}
    \mathfrak m(\Gamma_\nu)
    =
    \operatorname{median}_{(n,L)\in\mathcal I}
    \min_{\lambda\in\Gamma_\nu}
    |\mathcal D_{n,L}^{\delta,M}(\lambda)|.
    \label{eq:empirical-margin}
\end{equation}
A small margin means that the contour passes too close to a zero of an empirical
determinant, in which case both the winding number and the moment estimate may
be unstable.

\subsection{Certified output and residual validation}
\label{subsec:certified-output}
\label{subsec:residual-validation}

We now define the accepted output of the algorithm.

\begin{definition}[Certified visible reciprocal pole]
\label{def:visible-pole}
A candidate contour \(\Gamma_\nu\) certifies one visible reciprocal pole if
\[
    \operatorname{Pers}(\Gamma_\nu)\geq\tau_{\rm pers},
    \qquad
    \mathfrak m(\Gamma_\nu)\geq\tau_{\rm cont},
\]
and if \(\Gamma_\nu\) is disjoint from the previously accepted contours.  The
certified reciprocal pole is \(\widehat\lambda_\nu\), and the corresponding
physical pole is
\[
    \widehat p_\nu=\frac1{\widehat\lambda_\nu}.
\]
\end{definition}

The estimated number of visible poles is
\begin{equation}
    \widehat N_{\rm vis}
    =
    \#\{\nu:\Gamma_\nu
    \text{ certifies one visible reciprocal pole}\}.
    \label{eq:Nvisible}
\end{equation}
This number is not the total number of all formal exterior poles in the
meromorphic continuation model.  It is the number of poles in the target annulus
\(1<|p|<R\) that are visible through stable determinant-root clustering and
local contour-count certification.  The empirical margin
\(\mathfrak m(\Gamma_\nu)\) is used here as a numerical safeguard; its
theoretical role is explained by the Rouch\'e analysis in
Section~\ref{sec:perturbation-visibility}.

After the visible reciprocal poles have been certified, their coefficient
amplitudes can be estimated from the same Fourier coefficient sequence.  For an
index set \(K_{\rm fit}\subset\{0,\ldots,M-1\}\), compute
\[
    (\widehat c_1,\ldots,\widehat c_{\widehat N_{\rm vis}})
    =
    \arg\min_{d_1,\ldots,d_{\widehat N_{\rm vis}}}
    \sum_{k\in K_{\rm fit}}
    \left|
    a_k^{\delta,M}
    -
    \sum_{\nu=1}^{\widehat N_{\rm vis}}
    d_\nu \widehat\lambda_\nu^k
    \right|^2 .
\]
The residual coefficients are then defined by
\begin{equation}
    \widehat h_k
    =
    a_k^{\delta,M}
    -
    \sum_{\nu=1}^{\widehat N_{\rm vis}}
    \widehat c_\nu\widehat\lambda_\nu^k .
    \label{eq:residual-coefficients}
\end{equation}

This subtraction step is a post-certification diagnostic, not the primary
pole-counting rule.  If the certified poles represent genuine dominant
singularities, the residual sequence \((\widehat h_k)\) should decay faster
than the original sequence \((a_k^{\delta,M})\), consistent with a holomorphic
background whose nearest remaining singularity lies farther away.  Failure of
faster residual decay does not invalidate a contour certification by itself,
but it indicates that unresolved poles, poorly separated clusters, or
background terms may still contribute over the available coefficient range.

\section{Perturbation, visibility, and contour certification}
\label{sec:perturbation-visibility}

The exact factorization in the pure-pole model explains why determinant roots
are natural candidate reciprocal poles.  In practice, however, the available
coefficients contain contributions from the holomorphic background, finite
sampling, and noise.  We now give a perturbation criterion under which a local
contour count remains stable.

For continuous exact coefficients, the pure-pole reference sequence is
\[
    a_k^{\rm p}
    =
    \sum_{j=1}^{N}c_j\lambda_j^k.
\]
For noiseless equispaced discrete coefficients, the corresponding pure-pole
reference sequence is
\[
    a_k^{{\rm p},M}
    =
    \sum_{j=1}^{N}c_j^M\lambda_j^k,
    \qquad
    c_j^M=\frac{c_j}{1-\lambda_j^M}.
\]
Both sequences have the same nodes \(\lambda_j\).  In this section
\(\mathcal D_{n,L}^{\rm ref}\) denotes the pure-pole reference determinant,
either continuous or noiseless discrete, depending on the data model.  Similarly,
\(a_k^{\rm ref}\) denotes the corresponding reference coefficient sequence.  The
perturbed sequence is written as
\[
    \widetilde a_k
    =
    a_k^{\rm ref}+b_k,
\]
where \(b_k\) contains the holomorphic background, aliasing from the background,
and measurement noise.  The perturbed determinant constructed from
\(\widetilde a_k\) is denoted by
\(\widetilde{\mathcal D}_{n,L}\).  In the numerical algorithm this object is the
empirical determinant \(\mathcal D_{n,L}^{\delta,M}\).

For fixed \(n\) and \(L\), set
\begin{equation}
    \omega_{n,L}
    =
    \max_{0\leq q\leq 2n-1}|b_{L+q}|.
    \label{eq:coefficient-perturbation-size}
\end{equation}
This quantity measures the coefficient perturbation over exactly the coefficient
segment used in the determinant characteristic.

\begin{lemma}[Coefficient perturbation from a holomorphic background]
\label{lem:coefficient-perturbation}
Assume that the holomorphic background satisfies
\[
    h(z)=\sum_{k=0}^{\infty}h_k z^k,
    \qquad
    |h_k|\leq C_h\rho_h^{-k},
    \qquad \rho_h>R.
\]
For continuous Fourier coefficients with coefficient noise \(e_k\), one has
\[
    b_k=h_k+e_k.
\]
Consequently,
\begin{equation}
    \omega_{n,L}
    \leq
    C_h\rho_h^{-L}
    +
    \max_{0\leq q\leq 2n-1}|e_{L+q}|.
    \label{eq:continuous-omega-bound}
\end{equation}
For noiseless equispaced discrete Fourier coefficients with \(M\) samples, the
background aliasing contribution satisfies
\begin{equation}
    \left|
    \sum_{q=0}^{\infty}h_{k+qM}
    \right|
    \leq
    \frac{C_h\rho_h^{-k}}{1-\rho_h^{-M}},
    \qquad 0\leq k\leq M-1.
    \label{eq:background-aliasing-bound}
\end{equation}
Hence the discrete coefficient perturbation is bounded by
\begin{equation}
    \omega_{n,L}^{M}
    \leq
    \frac{C_h\rho_h^{-L}}{1-\rho_h^{-M}}
    +
    \max_{0\leq q\leq 2n-1}|e_{L+q}^{M}|.
    \label{eq:discrete-omega-bound}
\end{equation}
\end{lemma}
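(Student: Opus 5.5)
The lemma is a bookkeeping statement, and I would prove it in three short steps directly from the coefficient structure \eqref{eq:coeff-structure} and the aliasing identity of Lemma~\ref{lem:aliasing}.

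\textbf{Continuous case.} The identity \(b_k=h_k+e_k\) holds by definition. The perturbed coefficients are \(\widetilde a_k=a_k+e_k=a_k^{\rm p}+h_k+e_k\), and subtracting the reference sequence \(a_k^{\rm ref}=a_k^{\rm p}\) leaves exactly \(h_k+e_k\). The bound \eqref{eq:continuous-omega-bound} then follows from the triangle inequality on each index \(L+q\), \(0\le q\le 2n-1\):
\[
|b_{L+q}|\le C_h\rho_h^{-L-q}+|e_{L+q}|\le C_h\rho_h^{-L}+\max_{0\le q'\le 2n-1}|e_{L+q'}|,
\]
using \(\rho_h>1\) so that \(\rho_h^{-q}\le1\). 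Taking the maximum over \(q\) gives the claim.

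\textbf{Aliasing bound.} For \eqref{eq:background-aliasing-bound}, I would bound the tail directly:
\[
\Bigl|\sum_{q\ge0}h_{k+qM}\Bigr|\le C_h\rho_h^{-k}\sum_{q\ge0}\rho_h^{-qM}=\frac{C_h\rho_h^{-k}}{1-\rho_h^{-M}}.
\]
Absolute convergence is justified because \(\rho_h>R>1\).

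\textbf{Discrete case.} This is the only step needing care, namely identifying \(b_k\) correctly. Apply Lemma~\ref{lem:aliasing} to \(f=h+\sum_j r_j/(z-p_j)\), which is holomorphic near \(\overline{\D}\). By linearity of the aliasing sum,
\[
a_k^M=\sum_{q\ge0}h_{k+qM}+a_k^{{\rm p},M},
\]
where the pole part is handled by Lemma~\ref{lem:discrete-pure-pole}. Adding the discrete noise \(e_k^M=\varepsilon_k^M\) from \eqref{eq:noisy-discrete-fourier} and subtracting the reference \(a_k^{{\rm p},M}\) gives
\[
b_k^M=\sum_{q\ge0}h_{k+qM}+e_k^M.
\]
Then I would repeat the continuous argument, using \eqref{eq:background-aliasing-bound} at \(k=L+q\) together with \(\rho_h^{-L-q}\le\rho_h^{-L}\). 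The standing assumption \(L+2n\le M\) ensures that every index \(L+q\) lies in \(\{0,\ldots,M-1\}\), so the aliasing formula applies without periodic re-indexing. This yields \eqref{eq:discrete-omega-bound}.

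I expect no real obstacle. The only point to check is that the discrete reference amplitudes \(c_j^M\) already absorb the pole aliasing, so that only the background aliasing remains in \(b_k^M\).
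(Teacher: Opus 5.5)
Your proposal is correct and follows the paper's proof: the continuous bound comes from $b_k=h_k+e_k$ and the decay of $h_k$, the aliasing bound comes from the geometric series $\sum_{q\ge0}\rho_h^{-qM}$, and the discrete bound comes from maximizing over $k=L,\ldots,L+2n-1$. The paper leaves two points implicit that you make explicit: that $b_k^M=\sum_{q\ge0}h_{k+qM}+e_k^M$, with the pole aliasing already absorbed into $c_j^M$, and that $L+2n\le M$ keeps every index in range.
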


\begin{proof}
The continuous estimate follows directly from \(b_k=h_k+e_k\) and the assumed
coefficient decay of \(h\).  For the discrete estimate, the aliasing identity gives
\[
    h_k^M
    =
    \sum_{q=0}^{\infty}h_{k+qM}.
\]
Using \(|h_k|\leq C_h\rho_h^{-k}\), we obtain
\[
    |h_k^M|
    \leq
    C_h\rho_h^{-k}
    \sum_{q=0}^{\infty}\rho_h^{-qM}
    =
    \frac{C_h\rho_h^{-k}}{1-\rho_h^{-M}}.
\]
Taking the maximum over \(k=L,\ldots,L+2n-1\) gives
\eqref{eq:discrete-omega-bound}.
\end{proof}

The shift \(L\) therefore has a precise role: it suppresses the holomorphic
background by the factor \(\rho_h^{-L}\).  However, large shifts also reduce the
pole signal through factors \(\lambda_j^L\).  This trade-off is one reason why
the method tests several shifts rather than using a single one.

\begin{lemma}[Determinant perturbation bound]
\label{lem:determinant-perturbation}
Let \(\Gamma\) be a compact contour in the \(\lambda\)-plane.  Suppose that all
entries of the unperturbed determinant matrix defining
\(\mathcal D_{n,L}^{\rm ref}\) are bounded by \(B_\Gamma\) on \(\Gamma\), and
that the coefficient perturbations satisfy \eqref{eq:coefficient-perturbation-size}.
Then
\begin{equation}
    \max_{\lambda\in\Gamma}
    \left|
    \widetilde{\mathcal D}_{n,L}(\lambda)
    -
    \mathcal D_{n,L}^{\rm ref}(\lambda)
    \right|
    \leq
    (n+1)!
    \left[
    (B_\Gamma+\omega_{n,L})^{n+1}
    -
    B_\Gamma^{n+1}
    \right].
    \label{eq:determinant-perturbation-bound}
\end{equation}
In particular, for small \(\omega_{n,L}\),
\begin{equation}
    \max_{\lambda\in\Gamma}
    \left|
    \widetilde{\mathcal D}_{n,L}(\lambda)
    -
    \mathcal D_{n,L}^{\rm ref}(\lambda)
    \right|
    \leq
    (n+1)!(n+1)(B_\Gamma+\omega_{n,L})^{n}\omega_{n,L}.
    \label{eq:linear-det-bound}
\end{equation}
\end{lemma}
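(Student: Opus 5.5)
We expand both determinants with the Leibniz formula and compare them term by term. Write \(X=(x_{ik})\) for the \((n+1)\times(n+1)\) matrix defining \(\mathcal D_{n,L}^{\rm ref}(\lambda)\) and \(X+E\) for the matrix defining \(\widetilde{\mathcal D}_{n,L}(\lambda)\). The perturbation \(E\) has entries \(E_{ik}=b_{L+i+k}\) in the first \(n\) rows and vanishes in the last row, since the row \((1,\lambda,\ldots,\lambda^n)\) is common to both matrices. By \eqref{eq:coefficient-perturbation-size}, every entry satisfies \(|E_{ik}|\le\omega_{n,L}\), and by hypothesis \(|x_{ik}|\le B_\Gamma\) for all \(\lambda\in\Gamma\).

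For each permutation \(\sigma\) of \(\{0,\ldots,n\}\), we treat the product \(\prod_i (x_{i\sigma(i)}+E_{i\sigma(i)})\) as a multilinear expansion over subsets \(S\) of the rows, where the rows in \(S\) contribute \(E\)-entries and the remaining rows contribute \(x\)-entries. Subtracting the unperturbed product removes exactly the term \(S=\emptyset\). The other terms have modulus at most \(\omega_{n,L}^{|S|}B_\Gamma^{n+1-|S|}\). Summing over nonempty \(S\), the binomial theorem gives
\[
\Bigl|\prod_i (x_{i\sigma(i)}+E_{i\sigma(i)})-\prod_i x_{i\sigma(i)}\Bigr|
\le (B_\Gamma+\omega_{n,L})^{n+1}-B_\Gamma^{n+1}.
\]
Summing over the \((n+1)!\) permutations with the triangle inequality yields \eqref{eq:determinant-perturbation-bound}, uniformly in \(\lambda\in\Gamma\). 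This bound is cruder than necessary, since the last row carries no perturbation, but the cruder form is exactly what the statement asserts.

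For the linearized bound \eqref{eq:linear-det-bound}, we apply the mean value inequality to \(t\mapsto t^{n+1}\) on \([B_\Gamma,B_\Gamma+\omega_{n,L}]\):
\[
(B_\Gamma+\omega)^{n+1}-B_\Gamma^{n+1}\le (n+1)(B_\Gamma+\omega)^n\omega .
\]
Equivalently, one can use the factorization \(x^{n+1}-y^{n+1}=(x-y)\sum_{k=0}^{n} x^k y^{n-k}\) and bound each of the \(n+1\) terms by \((B_\Gamma+\omega)^n\). This inequality holds for all \(\omega\ge0\), so the phrase ``for small \(\omega_{n,L}\)'' in the statement is not actually needed.

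The one point needing care is the interpretation of \(B_\Gamma\). The entries in the last row are powers \(\lambda^k\), so \(B_\Gamma\) has to dominate \(\max_{\lambda\in\Gamma,\,k\le n}|\lambda|^k\) as well as \(\max_q |a^{\rm ref}_{L+q}|\). Taking \(B_\Gamma\) to bound all entries, as the hypothesis states, covers this. The rest is routine bookkeeping.
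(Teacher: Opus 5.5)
Your proposal is correct and follows the paper's proof: a Leibniz expansion, a per-permutation bound of $(B_\Gamma+\omega_{n,L})^{n+1}-B_\Gamma^{n+1}$, summation over the $(n+1)!$ permutations, and the mean value theorem for $x^{n+1}$. You go further than the paper by justifying the per-permutation bound through the multilinear expansion and the binomial theorem, which the paper only asserts, and your observations that the linearized bound holds for all $\omega_{n,L}\ge0$ and that $B_\Gamma$ must also dominate the powers $|\lambda|^k$ in the last row are both accurate.
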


\begin{proof}
The determinant is a finite sum of products of \(n+1\) matrix entries.  By the
Leibniz formula,
\[
    \det A
    =
    \sum_{\sigma\in S_{n+1}}
    \operatorname{sgn}(\sigma)
    \prod_{\alpha=0}^{n}A_{\alpha,\sigma(\alpha)}.
\]
For each permutation, the difference between the perturbed and unperturbed
products is bounded by
\[
    (B_\Gamma+\omega_{n,L})^{n+1}-B_\Gamma^{n+1}.
\]
Summing over the \((n+1)!\) permutations gives
\eqref{eq:determinant-perturbation-bound}.  The linearized bound
\eqref{eq:linear-det-bound} follows from the mean value theorem applied to
\(x^{n+1}\).
\end{proof}

\begin{lemma}[Pure-pole contour margin]
\label{lem:pure-pole-margin}
Assume the pure-pole model and take \(n=N\).  Let
\[
    \Gamma_j(\rho)
    =
    \{\lambda:|\lambda-\lambda_j|=\rho\},
\]
where
\[
    0<\rho<
    \min\left\{
    \operatorname{dist}(\lambda_j,\partial U_R),
    \frac12\min_{i\neq j}|\lambda_i-\lambda_j|
    \right\}.
\]
Then \(\Gamma_j(\rho)\) encloses \(\lambda_j\) and no other reciprocal pole.
Moreover,
\begin{equation}
    \min_{\lambda\in\Gamma_j(\rho)}
    |\mathcal D_{N,L}^{\rm ref}(\lambda)|
    \geq
    |A_L^{\rm ref}|\,
    \rho
    \prod_{i\neq j}
    \bigl(|\lambda_i-\lambda_j|-\rho\bigr),
    \label{eq:pure-pole-margin}
\end{equation}
where
\[
    \mathcal D_{N,L}^{\rm ref}(\lambda)
    =
    A_L^{\rm ref}\prod_{i=1}^{N}(\lambda-\lambda_i).
\]
Here \(A_L^{\rm ref}=A_L\) for continuous coefficients and
\(A_L^{\rm ref}=A_L^M\) for noiseless discrete coefficients.
\end{lemma}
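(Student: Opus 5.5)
The plan is to use the exact factorization from Proposition~\ref{prop:continuous-pure-pole-det}, or Proposition~\ref{prop:discrete-pure-pole-det} in the discrete case. Together they give
\[
\mathcal D_{N,L}^{\rm ref}(\lambda)=A_L^{\rm ref}\prod_{i=1}^{N}(\lambda-\lambda_i),
\]
where \(A_L^{\rm ref}=A_L\) or \(A_L^{\rm ref}=A_L^M\). In either case \(A_L^{\rm ref}\ne 0\), because \(c_j\neq0\), \(\lambda_j\neq0\), the nodes are distinct, and \(1-\lambda_j^M\neq0\). With this in hand, the statement reduces to elementary geometry of the circle \(\Gamma_j(\rho)\) relative to the nodes.

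\textbf{Enclosure.} The point \(\lambda_j\) is the center of \(\Gamma_j(\rho)\), so it lies strictly inside. For \(i\neq j\) we have \(|\lambda_i-\lambda_j|\geq\min_{i\neq j}|\lambda_i-\lambda_j|>2\rho>\rho\), so \(\lambda_i\) lies strictly outside the closed disk \(\{|\lambda-\lambda_j|\le\rho\}\). The condition \(\rho<\operatorname{dist}(\lambda_j,\partial U_R)\) is not needed for the counting itself. It only guarantees that the contour lies in \(U_R\), since \(\lambda_j\in U_R\) and the disk of radius \(\rho\) cannot reach \(\partial U_R\). I will note this explicitly.

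\textbf{Lower bound.} For \(\lambda\in\Gamma_j(\rho)\), write
\[
|\mathcal D_{N,L}^{\rm ref}(\lambda)|=|A_L^{\rm ref}|\,|\lambda-\lambda_j|\prod_{i\neq j}|\lambda-\lambda_i|.
\]
The factor \(|\lambda-\lambda_j|\) equals \(\rho\) exactly. For each other factor, the reverse triangle inequality gives
\[
|\lambda-\lambda_i|\geq|\lambda_i-\lambda_j|-|\lambda-\lambda_j|=|\lambda_i-\lambda_j|-\rho .
\]
This quantity is positive, indeed at least \(\rho\), by the separation assumption. Since all factors are nonnegative, the product inequality is legitimate, and taking the minimum over \(\Gamma_j(\rho)\) yields \eqref{eq:pure-pole-margin}. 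As a corollary, \(\mathcal D_{N,L}^{\rm ref}\) has no zero on the contour, so the count \(C_{N,L}(\Gamma_j(\rho))=1\) follows from \eqref{eq:pure-pole-local-count}.

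There is no real obstacle in this argument. The only points that need care are confirming \(A_L^{\rm ref}\neq0\) in the discrete case, which requires \(|\lambda_j|<1\), and checking the positivity of each factor \(|\lambda_i-\lambda_j|-\rho\) so that multiplying the individual lower bounds is valid.
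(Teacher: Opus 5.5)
Your proposal is correct and follows the paper's proof: substitute the pure-pole factorization, use $|\lambda-\lambda_j|=\rho$ on the contour and the reverse triangle inequality $|\lambda-\lambda_i|\ge|\lambda_i-\lambda_j|-\rho$ for $i\neq j$, then multiply the bounds. The extra checks you add are details the paper leaves implicit, and all are sound: the enclosure claim, the positivity of each factor from $|\lambda_i-\lambda_j|>2\rho$, and $A_L^{\rm ref}\neq0$ (which is not needed for the inequality itself).
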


\begin{proof}
On \(\Gamma_j(\rho)\), \(|\lambda-\lambda_j|=\rho\).  For \(i\neq j\), the
triangle inequality gives
\[
    |\lambda-\lambda_i|
    \geq
    |\lambda_i-\lambda_j|-|\lambda-\lambda_j|
    =
    |\lambda_i-\lambda_j|-\rho.
\]
Multiplying these lower bounds in the pure-pole factorization gives
\eqref{eq:pure-pole-margin}.
\end{proof}

Lemma~\ref{lem:pure-pole-margin} makes the visibility mechanism explicit.  The
local margin decreases when residues are small, when poles are close, when a
pole approaches the boundary of the search annulus, or when the shifted pole
signal becomes weak.

Combining the determinant perturbation bound with Rouch\'e's theorem gives the
main local certification result.

\begin{theorem}[Rouch\'e certification of a visible reciprocal pole]
\label{thm:visible-pole-certification}
Assume the pure-pole reference determinant has distinct reciprocal poles
\(\lambda_1,\ldots,\lambda_N\), and let \(\Gamma_j(\rho)\) be as in
Lemma~\ref{lem:pure-pole-margin}.  Suppose the perturbed determinant
\(\widetilde{\mathcal D}_{N,L}\) satisfies
\begin{equation}
    \Delta_{N,L}(\Gamma_j)
    :=
    \max_{\lambda\in\Gamma_j(\rho)}
    \left|
    \widetilde{\mathcal D}_{N,L}(\lambda)
    -
    \mathcal D_{N,L}^{\rm ref}(\lambda)
    \right|
    <
    \min_{\lambda\in\Gamma_j(\rho)}
    |\mathcal D_{N,L}^{\rm ref}(\lambda)|.
    \label{eq:rouche-visible-condition}
\end{equation}
Then \(\widetilde{\mathcal D}_{N,L}\) has exactly one zero inside
\(\Gamma_j(\rho)\).  Equivalently,
\begin{equation}
    \frac{1}{2\pi i}
    \int_{\Gamma_j(\rho)}
    \frac{
    \widetilde{\mathcal D}_{N,L}'(\lambda)
    }{
    \widetilde{\mathcal D}_{N,L}(\lambda)
    }
    \,\dd\lambda
    =
    1.
    \label{eq:certified-single-count}
\end{equation}
\end{theorem}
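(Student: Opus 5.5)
The argument is a direct application of the symmetric form of Rouch\'e's theorem to the two polynomials \(f=\mathcal D_{N,L}^{\rm ref}\) and \(g=\widetilde{\mathcal D}_{N,L}\) on the circle \(\Gamma_j(\rho)\).

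First I record the needed regularity. Both determinants are polynomials in \(\lambda\) of degree at most \(N\), since only the last row of \eqref{eq:det-characteristic} depends on \(\lambda\). Hence they are entire, and in particular holomorphic on a neighbourhood of the closed disk \(\{|\lambda-\lambda_j|\le\rho\}\).

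Next I locate the zeros of the reference determinant. By Proposition~\ref{prop:continuous-pure-pole-det}, or Proposition~\ref{prop:discrete-pure-pole-det} in the discrete case, we have \(\mathcal D_{N,L}^{\rm ref}=A_L^{\rm ref}\prod_i(\lambda-\lambda_i)\) with \(A_L^{\rm ref}\neq0\). The nonvanishing holds because \(c_j\neq0\), \(\lambda_j\neq0\) (as \(|\lambda_j|>1/R\)), the nodes are distinct, and \(1-\lambda_j^M\neq0\). Lemma~\ref{lem:pure-pole-margin} says that \(\Gamma_j(\rho)\) encloses \(\lambda_j\) and no other \(\lambda_i\). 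It also gives \(\min_{\Gamma_j(\rho)}|\mathcal D^{\rm ref}_{N,L}|>0\), since \(\rho>0\) and \(|\lambda_i-\lambda_j|-\rho>\rho>0\). Therefore \(\mathcal D^{\rm ref}_{N,L}\) has no zero on the contour and exactly one zero, which is simple, inside it.

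Now I apply Rouch\'e. For every \(\lambda\in\Gamma_j(\rho)\), hypothesis \eqref{eq:rouche-visible-condition} gives
\[
|\widetilde{\mathcal D}_{N,L}(\lambda)-\mathcal D^{\rm ref}_{N,L}(\lambda)|\le\Delta_{N,L}(\Gamma_j)<\min_{\Gamma_j(\rho)}|\mathcal D^{\rm ref}_{N,L}|\le|\mathcal D^{\rm ref}_{N,L}(\lambda)|.
\]
Two consequences follow.

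\begin{enumerate}[label=(\roman*)]
\item By the reverse triangle inequality, \(|\widetilde{\mathcal D}_{N,L}(\lambda)|>0\) on \(\Gamma_j(\rho)\). So the logarithmic integral in \eqref{eq:certified-single-count} is well defined.
\item Rouch\'e's theorem yields that \(\widetilde{\mathcal D}_{N,L}\) and \(\mathcal D^{\rm ref}_{N,L}\) have the same number of zeros inside \(\Gamma_j(\rho)\), counted with multiplicity, namely one.
\end{enumerate}

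If one prefers to avoid quoting Rouch\'e, a homotopy argument works instead. Set \(D_t=\mathcal D^{\rm ref}_{N,L}+t(\widetilde{\mathcal D}_{N,L}-\mathcal D^{\rm ref}_{N,L})\) for \(t\in[0,1]\). The same inequality shows \(D_t\neq0\) on the contour for every \(t\). The count \(S_0(D_t,\Gamma_j(\rho))\) is then continuous in \(t\) and integer valued, so it is constant and equal to its value \(1\) at \(t=0\).

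Finally, the equivalence with \eqref{eq:certified-single-count} is the argument principle, i.e.\ Lemma~\ref{lem:contour-moments} with \(\kappa=0\). There is no real obstacle in the proof. The only points needing care are that the strict inequality in the hypothesis is what rules out zeros of \(\widetilde{\mathcal D}_{N,L}\) on the contour, and that \(A_L^{\rm ref}\neq0\), which is what makes the minimum on the right-hand side of the hypothesis positive.
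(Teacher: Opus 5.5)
Your proposal is correct and follows the same route as the paper: the hypothesis is exactly the Rouch\'e condition for \(\mathcal D_{N,L}^{\rm ref}\) and the perturbation \(\widetilde{\mathcal D}_{N,L}-\mathcal D_{N,L}^{\rm ref}\) on \(\Gamma_j(\rho)\), the factorization supplies the single enclosed zero \(\lambda_j\), and the argument principle gives the integral identity. Your extra checks make explicit details the paper leaves implicit: that \(A_L^{\rm ref}\neq0\), that the margin is positive, and that \(\widetilde{\mathcal D}_{N,L}\) has no zero on the contour. The homotopy alternative is likewise a valid supplement rather than a different route.
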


\begin{proof}
By the pure-pole factorization,
\[
    \mathcal D_{N,L}^{\rm ref}(\lambda)
    =
    A_L^{\rm ref}\prod_{i=1}^{N}(\lambda-\lambda_i),
\]
and the contour \(\Gamma_j(\rho)\) encloses exactly one zero, namely
\(\lambda_j\).  Condition \eqref{eq:rouche-visible-condition} is precisely the
hypothesis of Rouch\'e's theorem for the pair
\[
    \mathcal D_{N,L}^{\rm ref}
    \quad\text{and}\quad
    \widetilde{\mathcal D}_{N,L}-\mathcal D_{N,L}^{\rm ref}.
\]
Hence \(\mathcal D_{N,L}^{\rm ref}\) and \(\widetilde{\mathcal D}_{N,L}\) have
the same number of zeros inside \(\Gamma_j(\rho)\), counted with multiplicity.
The contour integral identity then follows from the argument principle.
\end{proof}

The theorem motivates the empirical margin used in the algorithm.  The reference
margin
\[
    \min_{\lambda\in\Gamma}|\mathcal D_{N,L}^{\rm ref}(\lambda)|
\]
is not known in computations.  Instead, we use the observable quantity
\[
    \min_{\lambda\in\Gamma}|\mathcal D_{n,L}^{\delta,M}(\lambda)|
\]
as a numerical safeguard.  A small empirical margin indicates that the contour
passes close to a determinant zero, so the winding number and the moment estimate
may be unstable.

The preceding theorem is stated for the correct order \(n=N\).  In practice,
\(N\) is unknown and several pairs \((n,L)\) are tested.  For a candidate contour
\(\Gamma\), define
\[
    \mathcal I(\Gamma)
    =
    \left\{
    (n,L)\in\mathcal I:
    C_{n,L}^{\delta,M}(\Gamma)=1
    \right\}.
\]
The persistence score is
\begin{equation}
    \operatorname{Pers}(\Gamma)
    =
    \frac{|\mathcal I(\Gamma)|}{|\mathcal I|}.
    \label{eq:pers-score-theory}
\end{equation}
For a candidate contour \(\Gamma_\nu\), we also write
\(\mathcal I_\nu:=\mathcal I(\Gamma_\nu)\).

\begin{definition}[Visible reciprocal pole]
\label{def:visible-reciprocal-pole}
A reciprocal pole \(\lambda_j\in U_R\) is called visible relative to the testing
family \(\mathcal I\), the contour family, and the noise level if there exists a
candidate contour \(\Gamma\subset U_R\) such that
\begin{enumerate}[label=(\roman*)]
\item \(\Gamma\) encloses \(\lambda_j\) and no other reciprocal pole;
\item the Rouch\'e condition \eqref{eq:rouche-visible-condition} holds for a
positive fraction of pairs \((n,L)\in\mathcal I\);
\item the empirical margin on \(\Gamma\) is above the prescribed threshold.
\end{enumerate}
\end{definition}

This definition is intentionally relative to the data quality and to the testing
family.  A pole with a very small residue, a pole close to another pole, or a pole
near the boundary of \(U_R\) may fail to be visible at a given noise level, even
though it is present in the meromorphic model.  When a candidate contour is
certified to contain one zero, the zero location is recovered by the moment formula
\eqref{eq:single-zero-location}.

\begin{remark}[Visible-pole interpretation and scope]
\label{rem:visible-pole-scope}
The determinant-characteristic method should be interpreted as a visible-pole
certification procedure, not as an unconditional recovery method for every pole
in \(\D_R\setminus\overline{\D}\).  A pole may fail to be certified when its
residue is small relative to the noise level, when it is far from the unit
circle so that the contribution \(c_j\lambda_j^k\) decays rapidly, when nearby
reciprocal poles lead to ill-conditioned Vandermonde factors, or when the
holomorphic background masks the pole contribution over the available range of
Fourier indices.  Equivalently, in the perturbation analysis, loss of visibility
occurs when the determinant perturbation on the relevant contour is comparable
to the pure-pole contour margin.

Thus the natural output of the method is the set of contour-certified visible
poles, together with persistence scores and contour margins.  Full recovery of
all exterior poles requires additional visibility, separation, and noise-margin
conditions.
\end{remark}

\section{Numerical experiments}
\label{sec:numerics}

We present numerical experiments to illustrate the proposed certification
principle rather than to provide an exhaustive benchmark comparison.  The
experiments focus on four questions: whether the pure-pole determinant
factorization is observed numerically, whether visible poles can be certified
in the presence of a holomorphic background and noise, how localization and
persistence change with noise, and how small residues, boundary proximity, and
poor separation lead to loss of visibility.

All experiments are performed in the target annulus
\[
    1<|p|<R,\qquad R=2.2,
\]
which corresponds to the reciprocal search region
\[
    U_R=\{\lambda\in\mathbb C:1/R<|\lambda|<1\}.
\]
Unless otherwise stated, the trial determinant orders and shifts are
\[
    n=2,\ldots,6,\qquad
    L\in\{4,8,12,16,20,24\}.
\]
The method reports contour-certified visible poles rather than all formal
poles in the meromorphic continuation model.

The experiments should be interpreted in light of
Theorem~\ref{thm:visible-pole-certification}.  The theorem gives a sufficient
condition for local zero-count stability: the determinant perturbation on a
contour must be smaller than the pure-pole contour margin.  In computations,
this exact margin is unknown.  Instead, we observe its numerical consequences:
persistent one-zero contour counts, stable moment-based pole estimates, and
large empirical contour margins.  When residues are small, poles are close,
poles lie near the boundary of \(U_R\), or the noise level increases, these
stability indicators deteriorate and some formal poles may cease to be
certified.

For localization accuracy, when the estimated and true pole numbers agree, we
use the maximum matching error
\[
    e_{\max}
    =
    \min_{\pi\in\mathfrak S_N}
    \max_{1\leq j\leq N}
    |\widehat\lambda_{\pi(j)}-\lambda_j|,
\]
where \(\mathfrak S_N\) denotes the set of permutations of
\(\{1,\ldots,N\}\).  When the estimated and true pole numbers differ, we report
nearest-neighbour errors from each true pole to the closest certified estimate.

\subsection{Pure-pole determinant identity}
\label{subsec:pure-pole-numerics}

We first test the exact pure-pole model without holomorphic background and
without noise.  The exterior poles are
\[
\begin{aligned}
    p_1 &= 1.25,\\
    p_2 &= 1.45e^{0.45i},\\
    p_3 &= 1.70e^{-0.55i}.
\end{aligned}
\]
The corresponding reciprocal poles are
\[
\lambda_1=0.800000,\qquad
\lambda_2=0.620998-0.299976i,\qquad
\lambda_3=0.501485+0.307463i.
\]
For the correct order \(n=N=3\), the determinant roots agree with the true
reciprocal poles up to round-off error.  Table~\ref{tab:pure-pole} reports the
maximum matching error for several shifts.

\begin{table}[htbp]
\centering
\caption{Pure-pole determinant identity.  For \(n=N=3\), the roots of
\(\mathcal D_{N,L}\) agree with the reciprocal poles for all tested shifts.}
\label{tab:pure-pole}
\begin{tabular}{ccc}
\toprule
Shift \(L\) & Number of roots in \(|\lambda|<0.95\) & Maximum matching error \\
\midrule
0  & 3 & \(4.442\times10^{-14}\) \\
4  & 3 & \(1.225\times10^{-13}\) \\
8  & 3 & \(1.013\times10^{-13}\) \\
12 & 3 & \(5.040\times10^{-13}\) \\
16 & 3 & \(3.420\times10^{-12}\) \\
20 & 3 & \(1.023\times10^{-11}\) \\
\bottomrule
\end{tabular}
\end{table}

\begin{figure}[htbp]
\centering
\includegraphics[width=0.66\textwidth]{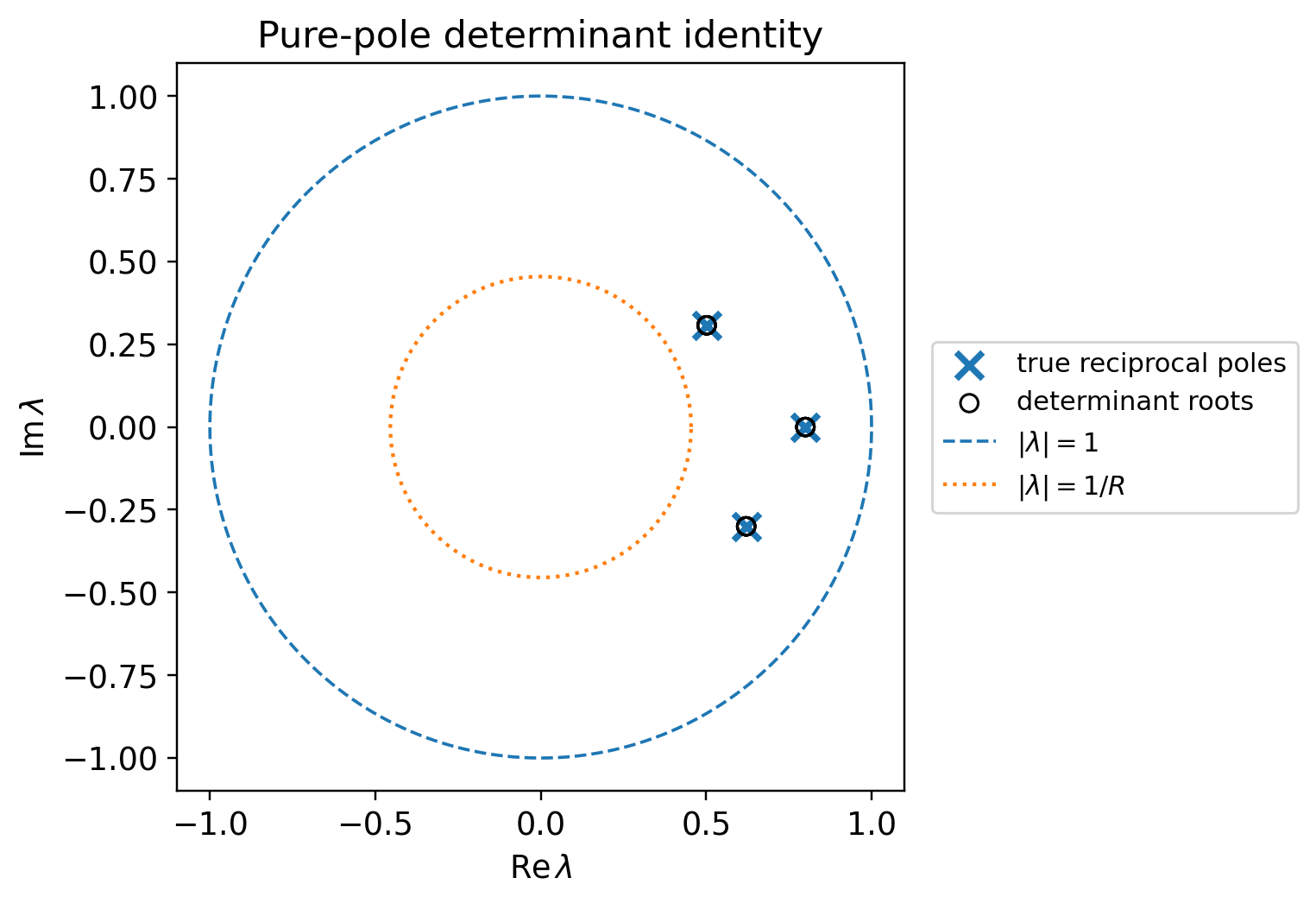}
\caption{Pure-pole determinant identity.  In the exact pure-pole case, the
determinant roots coincide with the true reciprocal poles.}
\label{fig:pure-pole}
\end{figure}

This experiment verifies the reference situation used in the Rouch\'e
comparison: in the absence of background, aliasing error, and noise, the
determinant characteristic has the exact pure-pole factorization.

\subsection{Visible-pole certification with background and noise}
\label{subsec:three-pole}

We next use the same three poles, now with a holomorphic background and
coefficient noise of level \(10^{-10}\).  The raw determinant roots are not
accepted directly as poles.  They are used only to propose candidate regions,
which are then tested by local argument-principle counts.

The algorithm produces 92 raw candidate roots in the target annulus, forming 18
candidate clusters.  After contour-count certification, three visible poles are
accepted:
\[
\begin{aligned}
\widehat\lambda_1 &= 0.800000+0.000000i,\\
\widehat\lambda_2 &= 0.620999-0.299976i,\\
\widehat\lambda_3 &= 0.501491+0.307463i.
\end{aligned}
\]
The corresponding persistence scores are
\[
    0.900,\qquad 0.833,\qquad 0.767.
\]
The maximum matching error is
\[
    e_{\max}=5.671\times10^{-6},
\]
and the median nearest-neighbour error is
\[
    7.919\times10^{-7}.
\]

\begin{figure}[htbp]
\centering
\includegraphics[width=0.70\textwidth]{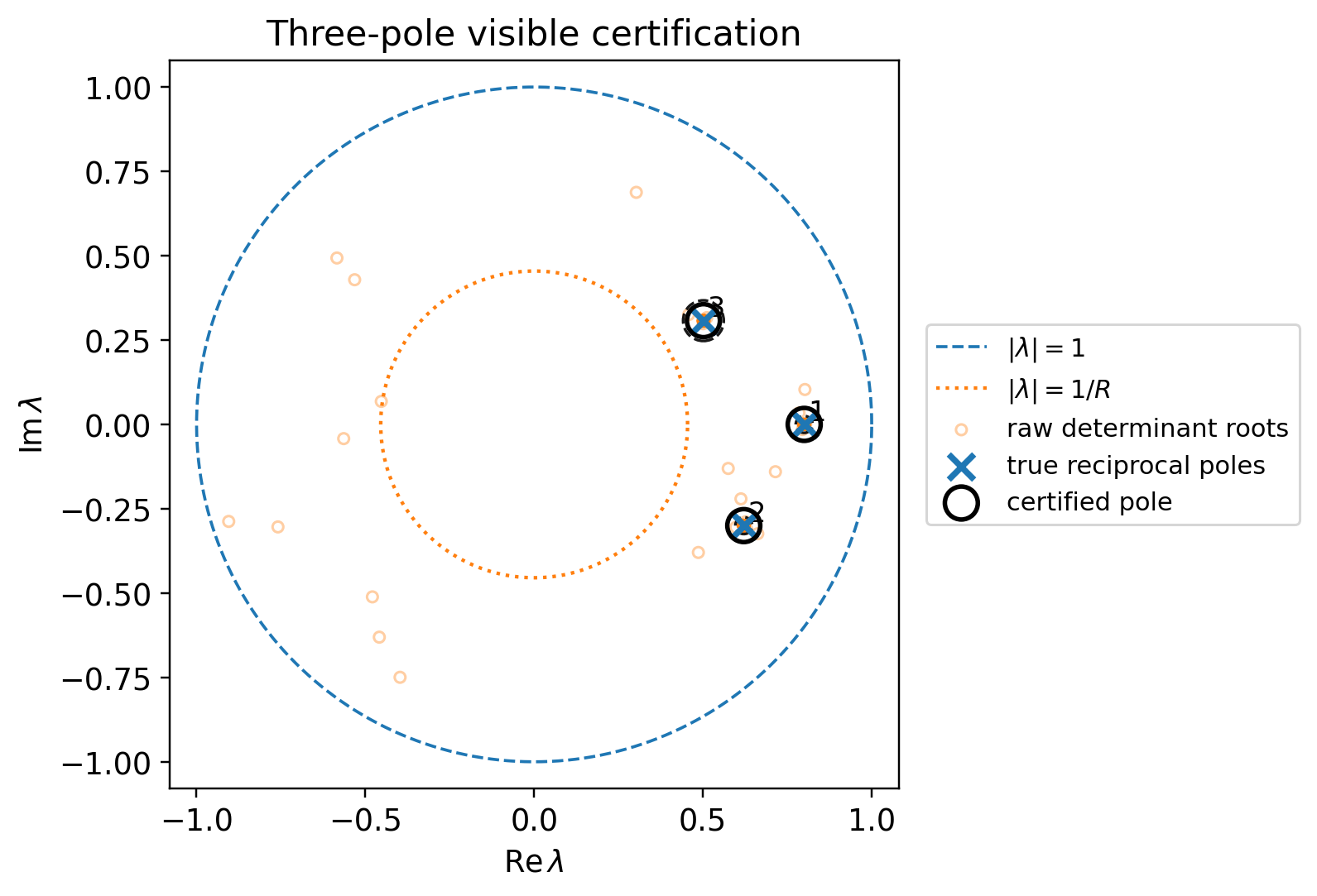}
\caption{Visible-pole certification with holomorphic background and noise.
Raw determinant roots are used only to propose candidate regions.  The certified
visible reciprocal poles are obtained by local contour-count persistence.}
\label{fig:three-pole}
\end{figure}

Figure~\ref{fig:three-pole} shows that many raw determinant roots do not pass
the certification step.  The reliable objects are not isolated roots of a
single determinant characteristic, but clusters whose local zero count remains
stable across determinant orders and shifts.  This supports the
root-propose and contour-certify interpretation of the method.

\subsection{Noise robustness}
\label{subsec:noise}

We test the same three-pole model under increasing coefficient noise.
Table~\ref{tab:noise} reports the certified pole number, localization error,
and persistence scores.  The algorithm certifies three visible poles for all
tested noise levels from \(0\) to \(10^{-6}\), but the localization error grows
and the weakest persistence score decreases as the noise level increases.

\begin{table}[htbp]
\centering
\caption{Noise robustness for the three-pole model.}
\label{tab:noise}
\begin{tabular}{ccccc}
\toprule
Noise level & \(\widehat N_{\rm vis}\) & \(e_{\max}\) &
Median nearest error & Persistence scores \\
\midrule
\(0\)        & 3 & \(3.103\times10^{-8}\) & \(1.297\times10^{-8}\) & \(0.77,\ 0.70,\ 0.60\) \\
\(10^{-12}\) & 3 & \(5.341\times10^{-7}\) & \(1.499\times10^{-7}\) & \(0.90,\ 0.83,\ 0.77\) \\
\(10^{-10}\) & 3 & \(2.680\times10^{-5}\) & \(1.438\times10^{-6}\) & \(0.90,\ 0.83,\ 0.77\) \\
\(10^{-8}\)  & 3 & \(1.807\times10^{-4}\) & \(4.352\times10^{-5}\) & \(0.90,\ 0.83,\ 0.57\) \\
\(10^{-6}\)  & 3 & \(3.685\times10^{-3}\) & \(5.456\times10^{-4}\) & \(0.90,\ 0.60,\ 0.30\) \\
\bottomrule
\end{tabular}
\end{table}

This behavior is consistent with Lemma~\ref{lem:determinant-perturbation} and
Theorem~\ref{thm:visible-pole-certification}.  Increasing the noise level
increases the effective coefficient perturbation size \(\omega_{n,L}\), and
therefore increases the determinant perturbation on the contour.  The
high-noise case \(10^{-6}\) is particularly informative: although all three
poles are still certified, the third persistence score drops to \(0.30\),
indicating that its local margin is close to the perturbation scale.

\subsection{Visibility limitations}
\label{subsec:limitations-numerics}

Finally, we examine three difficult configurations that explain the
visible-pole formulation.  These tests probe the main factors in the pure-pole
margin lower bound \eqref{eq:pure-pole-margin}: the amplitude factor through
\(A_L^{\rm ref}\), the separation factors
\(|\lambda_i-\lambda_j|-\rho\), and the admissible contour radius constrained by
\(\operatorname{dist}(\lambda_j,\partial U_R)\).

In the first case, one pole has a small residue,
\[
    |r_2|=6.325\times10^{-3},
\]
with noise level \(10^{-8}\).  The method still certifies all three poles, but
the weak-residue pole has a lower persistence score, \(0.444\), compared with
\(1.000\) and \(0.611\) for the other two certified poles.  Thus a small residue
does not necessarily make a pole invisible, but it reduces the persistence
evidence.

In the second case, the third pole satisfies
\[
    |p_3|=2.15,\qquad |\lambda_3|=0.4651.
\]
Since \(1/R\approx0.4545\), this reciprocal pole lies close to the inner
boundary of the target annulus.  The method certifies only two visible poles.
This agrees with the margin estimate: when \(\lambda_j\) approaches
\(\partial U_R\), the admissible contour radius must shrink, and the
corresponding contour margin decreases.

In the third case, two reciprocal poles are close:
\[
    \lambda_2=0.725975-0.147162i,
    \qquad
    \lambda_3=0.710705-0.166407i,
\]
with spacing
\[
    |\lambda_2-\lambda_3|=2.457\times10^{-2}.
\]
Only one visible pole is certified.  Closely spaced poles may fail to generate
separated certified clusters under the chosen contour and clustering scale,
which reflects the separation product in \eqref{eq:pure-pole-margin}.

\begin{figure}[htbp]
\centering
\begin{minipage}{0.32\textwidth}
\centering
\includegraphics[width=\textwidth]{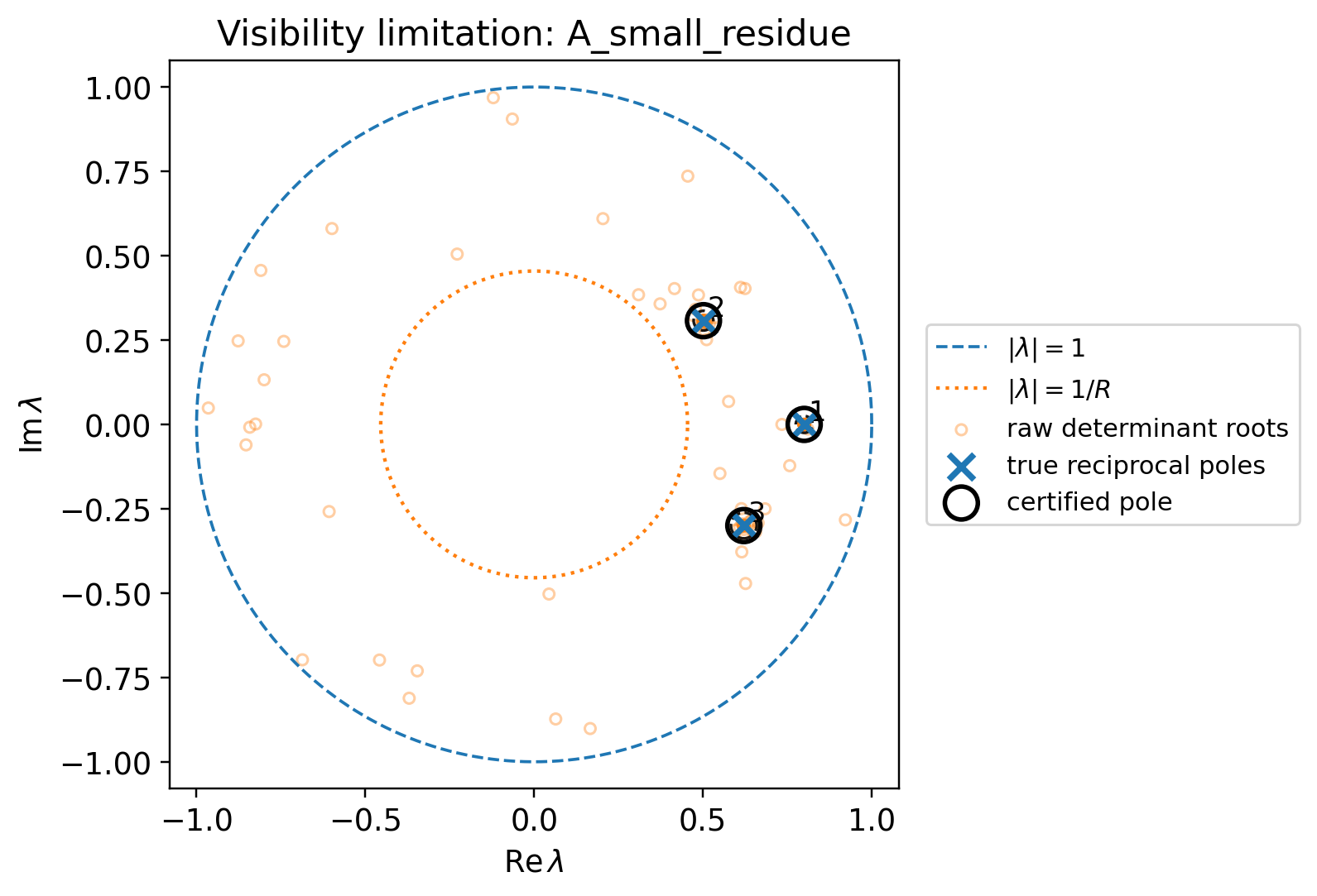}\\
{\small (a) Small residue}
\end{minipage}
\hfill
\begin{minipage}{0.32\textwidth}
\centering
\includegraphics[width=\textwidth]{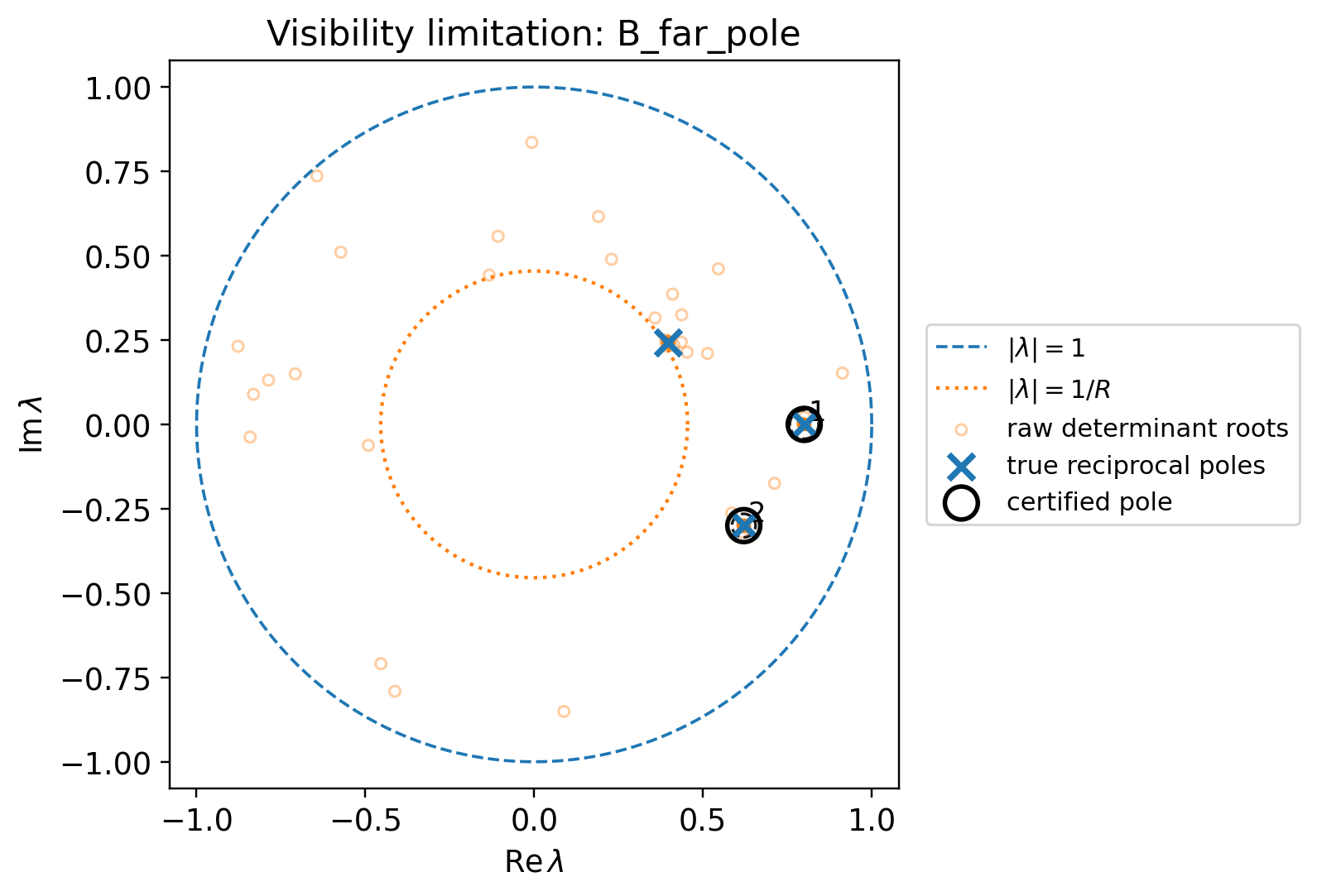}\\
{\small (b) Boundary-near pole}
\end{minipage}
\hfill
\begin{minipage}{0.32\textwidth}
\centering
\includegraphics[width=\textwidth]{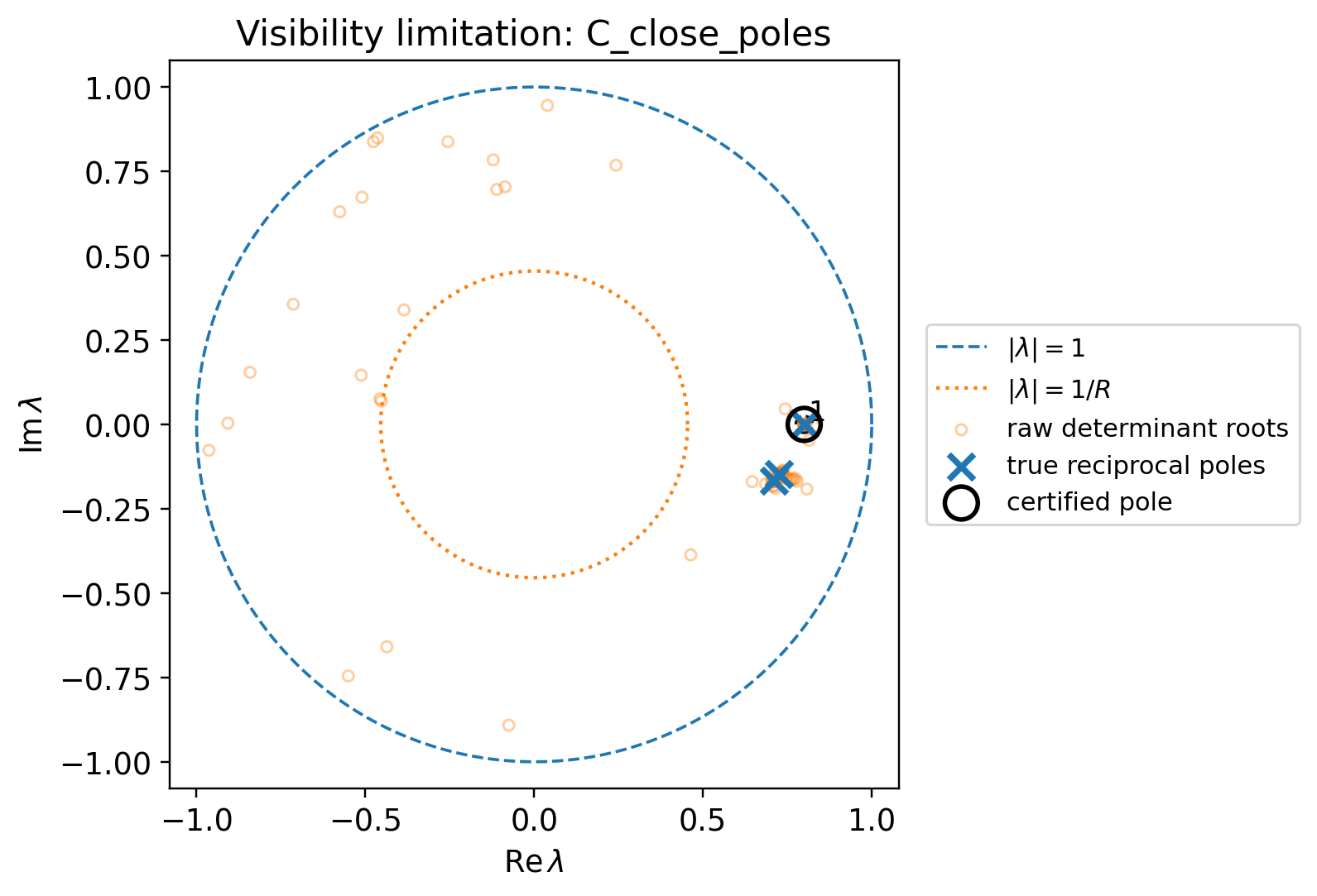}\\
{\small (c) Close poles}
\end{minipage}
\caption{Visibility limitations.  Small residues reduce persistence, poles near
the boundary of \(U_R\) have smaller admissible certifying contours, and close
poles may fail to produce separated certified components.}
\label{fig:visibility-limitations}
\end{figure}

These limitation tests support the intended interpretation of the method: the
certified set consists of poles that are visible through stable determinant
root clustering and local contour-count certification.

\section{Conclusions}
\label{sec:conclusions}

We developed a determinant-characteristic and argument-principle framework for
certifying visible exterior poles in outward meromorphic continuation from
circular boundary data.  In the pure finite-pole model, the shifted determinant
characteristic for the correct order factors exactly into a nonzero constant
times the polynomial whose zeros are the reciprocal exterior poles.  The same
node set is retained for noiseless equispaced discrete Fourier coefficients;
sampling changes only the amplitudes through an aliasing factor.  This provides
the algebraic basis for using determinant roots as candidate reciprocal poles.

For perturbed data, roots of a single empirical determinant are not reliable by
themselves.  The proposed procedure therefore follows the rule that roots
propose and contours certify.  Candidate roots are clustered in the reciprocal
search region, and local argument-principle counts are used to certify clusters
that contain one determinant zero persistently over a family of determinant
orders and shifts.  The accompanying contour moments provide local pole
estimates, while empirical margins serve as numerical safeguards against
unstable contours.

The perturbation analysis gives a sufficient Rouch\'e-type condition for stable
local zero counts.  It also clarifies the meaning of visibility: residues,
reciprocal-pole separation, distance to the boundary of the target annulus,
coefficient shifts, holomorphic background decay, and noise level all enter the
balance between determinant perturbation and pure-pole contour margin.  The
numerical experiments reflect these mechanisms.  Low-complexity configurations
are certified accurately under background and noise, whereas high noise, weak
residues, boundary-near poles, and close poles reduce persistence or lead to
partial recovery.

The method should therefore be viewed as a certification procedure for visible
poles rather than as an unconditional all-pole recovery algorithm.  Future work
will focus on sharper quantitative visibility conditions, adaptive contour and
shift selection, and systematic comparison with Hankel pencil, Pad\'e, and AAA
rational continuation methods.

\bibliography{sn-bibliography}

\end{document}